\newtheorem{theorem}{Theorem}[section]
\newtheorem{corollary}[theorem]{Corollary}
\newtheorem{definition}[theorem]{Definition}
\newtheorem{lemma}[theorem]{Lemma}
\newenvironment{proof}[1][Proof]{\noindent \textbf{#1.} }{\hfill  \rule{0.5em}{0.5em}}
\def \v{\mathbf{v}}
\def \u{\mathbf{u}}
\def \x{\mathbf{x}}
\def \A{\mathcal{A}}
\def \L{\mathcal{L}}
\def \Q{\mathcal{Q}}
\def \T{\mathcal{T}}
\def \Gn{\mathcal{G}_n}
\def \Gnb{^{\rm nb}\!\mathcal{G}_n}
\def \Gnk{\mathcal{G}_n^{k,\frac{k}{2}}}
\def \Gnobk{^{\rm nob}\!\mathcal{G}_n^{k,\frac{k}{2}}}
\def \Gk{\mathcal{G}^{k,\frac{k}{2}}}
\def \Gobk{^{\rm nob}\!\mathcal{G}^{k,\frac{k}{2}}}
\begin{document}
\title{\bf On the spectral radius of a class of non-odd-bipartite even uniform hypergraphs\thanks{Supported by National Natural Science Foundation of China (11371028),
Program for New Century Excellent Talents in University (NCET-10-0001),
Scientific Research Fund for Fostering Distinguished Young Scholars of Anhui University(KJJQ1001),
Academic Innovation Team of Anhui University Project (KJTD001B). }}
\author{Murad-ul-Islam Khan, Yi-Zheng Fan\thanks{Corresponding author.
 E-mail addresses: fanyz@ahu.edu.cn(Y.-Z. Fan), muradulislam@foxmail.com (M. Khan)}\\
    {\small \it School of Mathematical Sciences, Anhui University, Hefei 230601, P. R. China} \\
 }
\date{}
\maketitle

{\bf Abstract}: In order to investigate the non-odd-bipartiteness of even uniform hypergraphs,
starting from a simple graph $G$, we construct a generalized power of $G$, denoted by $G^{k,s}$, which is obtained from $G$ by blowing up each vertex into a $k$-set and each edge
into a $(k-2s)$-set, where $s \le k/2$.
When $s < k/2$,   $G^{k,s}$ is always odd-bipartite.
We show that $G^{k,{k \over 2}}$ is non-odd-bipartite if and only if $G$ is non-bipartite, and find that $G^{k,{k \over 2}}$ has the same adjacency (respectively, signless Laplacian) spectral radius as $G$.
So the results involving the adjacency or signless Laplacian spectral radius of a simple graph $G$ hold for  $G^{k,{k \over 2}}$.
In particular, we characterize the unique graph with minimum adjacency or signless Laplacian spectral radius among all non-odd-bipartite hypergraphs $G^{k,{k \over 2}}$ of fixed order, and
prove that $\sqrt{2+\sqrt{5}}$ is the smallest limit point of the non-odd-bipartite hypergraphs $G^{k,{k \over 2}}$.
In addition we obtain some results for the spectral radii of the weakly irreducible nonnegative tensors.

%


{\bf Keywords}: Hypergraph; non-odd-bipartiteness; adjacency tensor; signless Laplacian tensor; spectral radius

\section{Introduction}

Hypergraphs are a generalization of simple graphs.
They are really handy to show complex relationships found in the real world.
A {\it hypergraph} $G=(V(G),E(G))$ is a set of vertices say $V(G)=\{v_1,v_2,\ldots,v_n\}$ and a set of edges, say $E(G)=\{e_{1},e_2,\ldots,e_{m}\}$ where $e_{j}\subseteq V(G)$.
If $|e_{j}|=k$ for each $j=1,2,\ldots,m$, then $G$ is called a {\it $k$-uniform} hypergraph.
In particular, the $2$-uniform hypergraphs are exactly the classical simple graphs.
The {\it degree} $d_v$ of a vertex $v \in V(G)$ is defined as $d_v=|\{e_{j}:v\in e_{j}\in E(G)\}|$.
 A {\it walk} $W$ of length $l$ in $G$ is a sequences of alternate vertices and edges: $v_{0},e_{1},v_{1},e_{2},\ldots,e_{l},v_{l}$,
    where $\{v_{i},v_{i+1}\}\subseteq e_{i}$ for $i=0,1,\ldots,l-1$.
If $v_0=v_l$, then $W$ is called a {\it circuit}.
A walk in $G$ is called a {\it path} if no vertices or edges are repeated.
A circuit in $G$  is called a {\it cycle} if no vertices or edges are repeated.
The hypergraph $G$  is said to be {\it connected} if every two vertices are connected by a walk.
A hypergraph $H$ is a {\it sub-hypergraph} of $G$ if $V(H)\subseteq V(G)$ and $E(H) \subseteq E(G)$,
and $H$ is a {\it proper sub-hypergraph} of $G$ if $V(H)\subsetneq V(G)$ or $E(H) \subsetneq E(G)$.

In recent years spectral hypergraph theory  has emerged as an important field in algebraic graph theory.
Let $G$ be a $k$-uniform hypergraph.
The {\it adjacency tensor} $\mathcal{A}=\mathcal{A}(G)=(a_{i_{1}i_{2}\ldots i_{k}})$ of $G$ is a $k$th order $n$-dimensional symmetric tensor,
  where
  $$
  a_{i_{1}i_{2}\ldots i_{k}}=\left\{
  \begin{array}{cl}
   \frac{1}{(k-1)!} & \hbox{if~} \{v_{i_{1}},v_{i_{2}},\ldots,v_{i_{k}}\} \in E(G);\\
  0 &  \hbox{otherwise.}
  \end{array}
    \right.
  $$
   Let $\mathcal{D}=\mathcal{D}(G)$ be a $k$th order $n$-dimensional diagonal tensor,
    where $d_{i\ldots i}=d_{v_i}$ for all $i \in [n]:=\{1,2,...,n\}$.
Then $\L=\L(G)=\mathcal{D}(G)-\A(G)$ is the {\it Laplacian tensor} of the hypergraph $G$,
and $\Q=\Q(G)=\mathcal{D}(G)+\A(G)$ is the {\it signless Laplacian tensor} of $G$.

Qi \cite{Qi} showed that $\rho(\L(G)) \le \rho(\Q(G))$, and posed a question of identifying the conditions under which the equality holds.
Hu et al. \cite{HQX} proved that
if $G$ is connected, then the equality holds if and only if $k$ is even and $G$ is odd-bipartite.
Here an  even uniform hypergraph $G$ is called {\it odd-bipartite} if $V(G)$ has a bipartition $V(G)=V_{1}\cup V_{2} $
   such that each edge has an odd number of vertices in both $V_{1}$ and $V_{2}$.
    Such partition will be called an {\it odd-bipartition} of $G$.
Shao et al. \cite{SSW} proved a stronger result that the Laplacian $H$-spectrum (respectively, Laplacian spectrum) and signless Laplacian $H$-spectrum (respectively, Laplacian spectrum) of a connected $k$-uniform hypergraph $G$ are equal
if and only if $k$ is even and $G$ is odd-bipartite.
They also proved that the adjacency $H$-spectrum of $G$ (respectively, adjacency spectrum) is symmetric with respect to the origin if and only if $k$ is even and $G$ is odd-bipartite.
So, the non-odd-bipartite even uniform hypergraphs are more interesting on distinguishing the Laplacian spectrum and signless Laplacian spectrum and studying the non-symmetric adjacency spectrum.

Hu, Qi and Shao \cite{HQS} introduced the {\it cored hypergraphs} and the {\it power hypergraphs},
   where the cored hypergraph is one such that each edge contains at least one vertex of degree $1$,
   and the $k$-th power of a simple graph $G$, denoted by $G^k$, is obtained by replacing each edge (a $2$-set) with a $k$-set by adding $k-2$ new vertices.
These two kinds of hypergraphs are both odd-bipartite.

Peng \cite{P} introduced $s$-path and $s$-cycle.
Suppose $1\le s\leq k-1.$
An {\it $s$-path} $P$ of length $d$ is a $k$-uniform hypergraph on $s+d(k-s)$ vertices, say $v_1,v_2, \ldots,v_{s+d(k-s)}$, such that
$\{v_{1+j(k-s)}$, $v_{2+j(k-s)}$, $\ldots$, $v_{s+(j+1)(k-s)}\}$ is an edge of $P$ for $j=0,\ldots,d-1$.
An {\it $s$-cycle} $C$ of length $d$  is a $k$-uniform hypergraph on $d(k-s)$ vertices, say $v_1,v_2, \ldots,v_{d(k-s)}$, such that
$\{v_{1+j(k-s)}$, $v_{2+j(k-s)}$, $\ldots$, $v_{s+(j+1)(k-s)}\}$ is an edge of $C$ for $j=0,...,d-1$, where $v_{d(k-s)+j}=v_{j}$ for $j=1,...,s$.
When $1\leq s<\frac{k}{2}$, an $s$-path or $s$-cycle is a cored hypergraph and hence it is odd-bipartite.

Up to now, the construction of non-odd-bipartite hypergraphs has rarely appeared.
In Section 2 we proved that an $s$-path is always odd-bipartite. But this does not hold for $s$-cycles.
However, when $s=k/2$ for $k$ being even, an $s$-cycle is odd-bipartite if and only if its length is even, which is consistent with the result on the bipartiteness of a simple cycle. Motivated by the discussion of $s$-cycles, we introduce a class of $k$-uniform hypergraphs, which is obtained from a simple graph by blowing up vertices and/or edges.

\begin{definition}
Let $G=(V,E)$ be a simple graph. For any $k \ge 3$ and $1 \le s \le k/2$, the generalized power of $G$, denoted by $G^{k,s}$, is defined as
the $k$-uniform hypergraph with the vertex set $\{\v: v \in V\} \cup \{\mathbf{e}: e \in E\}$, and the edge set
$\{\u \cup \v \cup \mathbf{e}: e=\{u,v\} \in E\}$, where $\v$ is an $s$-set containing $v$ and $\mathbf{e}$ is a $(k-2s)$-set corresponding to $e$.
\end{definition}

\begin{center}
\includegraphics[scale=.8]{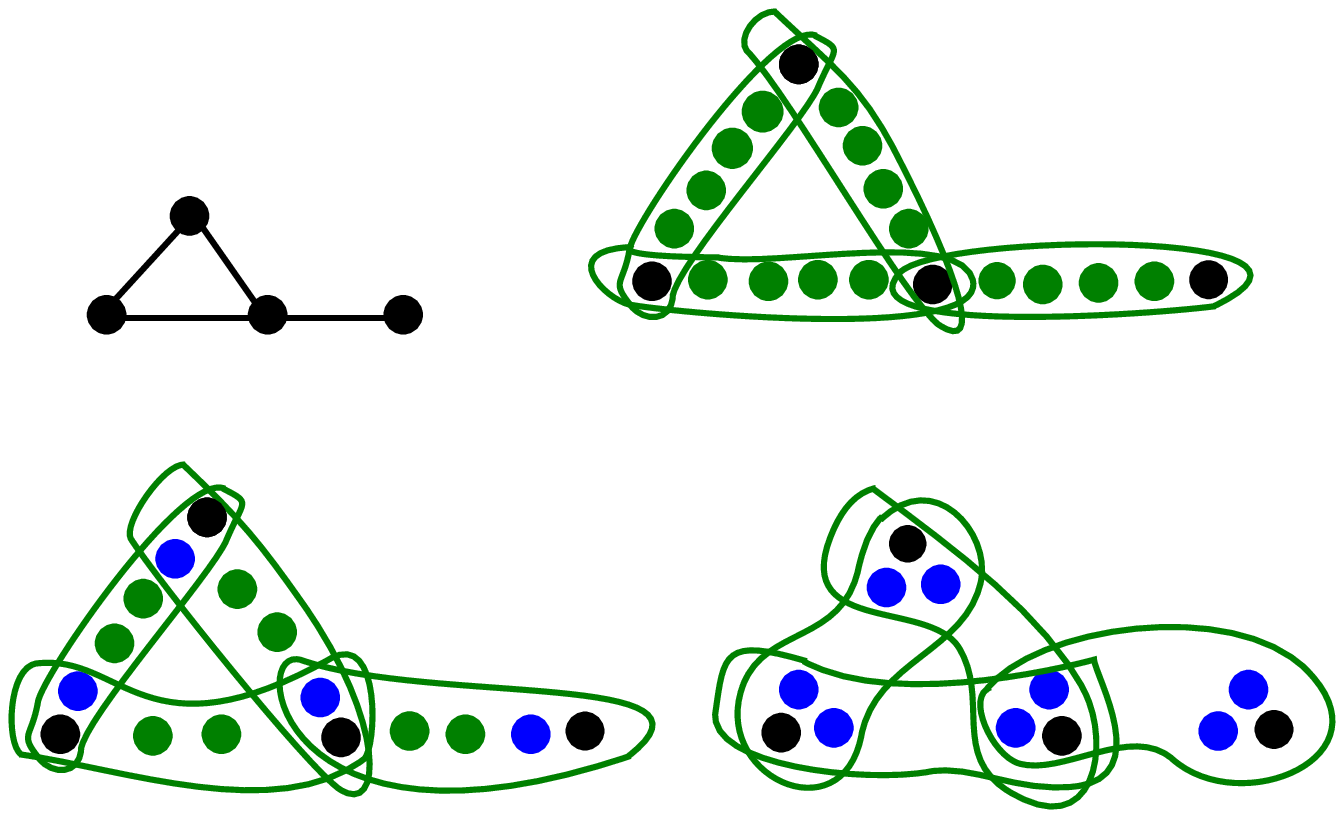}

\small{Fig. 1.1 Constructing power hypergraphs $G^6$ (right upper), $G^{6,2}$ (left below) and $G^{6,3}$ (right below) from a simple graph $G$ (left upper), where a closed curve represents an edge}
\end{center}

Intuitively, $G^{k,s}$ is obtained from $G$ by replacing each vertex $v$ by an $s$-subset $\v$ and each edge $\{u,v\}$ by a $k$-set obtained from $\u \cup \v$ by adding  $(k-2s)$ new vertices; see Fig. 1.1 for illustration.
  If $s=1$, then $G^{k,s}$  is exactly the $k$-th power hypergraph of $G$.
 When $G$ is a path or a cycle, then $G^{k,s}$ is an $s$-path or $s$-cycle for $s \le k/2$.
 So the notion $G^{k,s}$ is a generalization of the above hypergraphs.

Note that if $s<k/2$, then $G^{k,s}$ is a cored hypergraphs and hence is odd-bipartite.
If $s=k/2$, then $G^{k,s}$  is obtained from $G$ by only blowing up its vertices.
In this case, $\{u,v\}$ is an edge of $G$ if and only $\u\cup\v$ is an edge of $G^{k,{k \over 2} }$, where we use the black font $\v$ to denote the blowing-up of the vertex $v$ in $G$.
For simplicity, we write $uv$ rather than $\{u,v\}$, $\u\v$ rather than $\u\cup\v$, and call $\u$ a {\it half edge} of $G^{k,{k \over 2} }$.
In Section 2, we show that $G^{k,{k \over 2} }$ is non-odd-bipartite if and only if $G$ is non-bipartite.
So, we here give an explicit construction of  non-odd-bipartite hypergraphs.

Another problem is how to apply the spectral theory of simple graphs to that of hypergraphs.
In Section 3, we find that $G^{k,{k \over 2}}$ has the same adjacency (respectively, signless Laplacian) spectral radius as $G$.
So the results involving the adjacency or signless Laplacian spectral radius of a simple graph $G$ hold for  $G^{k,{k \over 2}}$.
Here we concern two problems: the minimum adjacency or signless Laplacian spectral radius and the smallest limit point of the graphs  $G^{k,{k \over 2}}$,
which are addressed in Section 4 respectively.

In the paper \cite{LM} the authors proved that the smallest limit point of the adjacency spectral radii of the connected $k$-uniform hypergraphs is $\rho_k=(k-1)!\sqrt[k]{4}$.
(Note that if using our definition for the adjacency tensor, the limit point would be $\sqrt[k]{4}$.)
They also classified all connected $k$-uniform hypergraphs with spectral radii at most $\rho_k$, which are all cored hypergraphs for $k \ge 5$.
(They used the notion of ``reducible hypergraphs'' instead of cored hypergraphs.)
 Even for $k=4$, those graphs are not cored hypergraphs but still odd-bipartite hypergraphs.
So, the next problem is to investigate the smallest limit point of the adjacency spectral radii of the connected $k$-uniform non-odd-bipartite hypergraphs.
We start this problem by considering the class of hypergraphs $G^{k,{k \over 2} }$ where $G$ is non-bipartite.

It is known that a uniform hypergraph is connected if and only if its adjacency tensor is weakly irreducible.
There are many results on the spectral theory of irreducible or weakly irreducible nonnegative tensor, e.g. \cite{CPZ,FGH,YY,YY2,YY3}.
However, to investigate the spectral radius of the adjacency tensor (or signless Laplacian tensor), we still need more results on the weakly irreducible nonnegative tensors.
This will be discussed in Section 3.


\section{Odd-bipartiteness of hypergraphs}
We first discuss the odd-bipartiteness of $s$-paths and $s$-cycles.

\begin{lemma}
An $s$-path is always odd-bipartite where $\frac{k}{2}\leq s\leq k-1.$
\end{lemma}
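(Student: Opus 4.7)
The plan is to exhibit an explicit odd-bipartition of the $s$-path. Recall that an $s$-path of length $d$ has vertex set $\{v_1,\ldots,v_{s+d(k-s)}\}$ and edges $e_j=\{v_{1+j(k-s)},\ldots,v_{s+(j+1)(k-s)}\}$ for $j=0,\ldots,d-1$. The crucial observation I would use is that the index set of $e_j$ is exactly the block of $k$ consecutive integers $\{1+j(k-s),\,2+j(k-s),\,\ldots,\,k+j(k-s)\}$, so every edge occupies a window of $k$ consecutive labels.

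Given this, I would set $V_1=\{v_i:i\equiv 1\pmod{k}\}$ and $V_2=V(P)\setminus V_1$. Since any $k$ consecutive integers contain exactly one representative of each residue class modulo $k$, each edge $e_j$ satisfies $|e_j\cap V_1|=1$ and $|e_j\cap V_2|=k-1$. Odd-bipartiteness is defined only for even uniform hypergraphs, so $k$ is even and both $1$ and $k-1$ are odd; hence $(V_1,V_2)$ is the required odd-bipartition.

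There is no real obstacle in this argument, because the choice of bipartition is essentially forced once one notices that edges are blocks of $k$ consecutive indices. An alternative, equally short route is induction on $d$: the base case $d=1$ is immediate, and at step $d$ the edge $e_{d-1}$ contains $k-s\ge 1$ brand-new vertices not lying in any earlier edge, giving complete freedom to adjust the parity of $|e_{d-1}\cap V_1|$ without disturbing the edges already handled. Either way, the hypothesis $s\le k-1$ is what guarantees that such new vertices exist at each step, whereas the hypothesis $k/2\le s$ merely places us in the range not already covered by the cored-hypergraph remark in the introduction.
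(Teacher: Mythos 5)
Your proof is correct, but your primary argument is a genuinely different (and arguably cleaner) route than the paper's. The paper proves the lemma by induction on the length $d$: it deletes the last edge $e_m$, applies the induction hypothesis to the shorter $s$-path, and then uses the $k-s\ge 1$ vertices of $e_m$ not in the shorter path to adjust the parity of $|e_m\cap V_1|$ --- exactly the ``alternative route'' you sketch at the end. Your main argument instead gives one explicit global odd-bipartition: since each edge $e_j$ is precisely the window $\{1+j(k-s),\ldots,k+j(k-s)\}$ of $k$ consecutive indices, taking $V_1=\{v_i: i\equiv 1\ (\mathrm{mod}\ k)\}$ forces $|e_j\cap V_1|=1$ and $|e_j\cap V_2|=k-1$, both odd because $k$ is even. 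This is a complete and correct proof; it even dispenses with the restriction $s\ge k/2$ (which, as you note, is only there because the case $s<k/2$ is already covered by the cored-hypergraph observation). The only quibble is cosmetic: in your closing sentence the roles of the two hypotheses matter for the inductive variant but not for your explicit construction, and in the inductive variant it is worth saying explicitly (as the paper does implicitly) that once $|e\cap V_1|$ is odd, $|e\cap V_2|=k-|e\cap V_1|$ is automatically odd because $k$ is even.
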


\begin{proof}
Let $P$ be an $s$-path of length $d$.
If $d=1$, the assertion holds clearly.
Assume the assertion holds for all $s$-paths of length $d < m$.
We prove it by induction on the length.
Consider an $s$-path $P$ of length $m$.
Let $e_{m}$ be the last edge of $P$.
Note that $P-e_m$ is an $s$-path, say $P'$ of length $m-1$, together with $k-s$ isolated vertices.
By induction, $P'$ is odd-bipartite, which has an odd-bipartition $V(P')=V_1 \cup V_2$.
 Now, if $|V_{1}\cap e_{m}|$ is odd, put all vertices of $e_m\backslash V(P')$ into $V_2$.
 Otherwise, take one vertex from $e_m\backslash V(P')$ and put it into $V_1$, and put the remaining into $V_2$.
 Then we get an odd-bipartition of $P$.
\end{proof}

What about the odd-bipartiteness of $s$-cycles when $\frac{k}{2} \leq s\leq k-1$?
We first discuss the case of $s=\frac{k}{2}$. In this case, we use the notation $C_m^{k,{k \over 2}}$ instead, where $C_m$ denote a simple cycle of length $m$.

\begin{lemma}\label{OB-Hypercycle}
The cycle $C_m^{k,{k \over 2}}$ is odd-bipartite if and only if $m$ is even.
\end{lemma}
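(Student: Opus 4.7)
The plan is to reduce the odd-bipartiteness of $C_m^{k,k/2}$ to a parity condition on the cyclic sequence of ``overlaps'' between each half-edge and a candidate side of the bipartition. Since $s=k/2$, we have $k-2s=0$, so every edge of $C_m^{k,k/2}$ is exactly the disjoint union of two consecutive half-edges $\u_i := \v_i$ and $\v_{i+1}$ (indices taken modulo $m$), with $|\v_i|=k/2$.

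Given any bipartition $V(C_m^{k,k/2}) = W_1 \cup W_2$, let $a_i := |\v_i \cap W_1|$ for $i=1,\dots,m$, so that $0 \le a_i \le k/2$. Because $\v_i$ and $\v_{i+1}$ are disjoint, the $i$-th edge contributes $a_i + a_{i+1}$ vertices to $W_1$ and $k-(a_i+a_{i+1})$ vertices to $W_2$. Since $k$ is even, these two counts have the same parity, so the bipartition is an odd-bipartition if and only if $a_i + a_{i+1}$ is odd for every cyclic index $i$.

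For necessity, I would sum $a_i + a_{i+1}$ over $i=1,\dots,m$ (cyclically): the total equals $2\sum_{i=1}^m a_i$, which is even. If each of the $m$ summands is odd, the sum has the parity of $m$, forcing $m$ to be even. For sufficiency, when $m$ is even I would exhibit the bipartition explicitly: set $a_i = 1$ for odd $i$ and $a_i = 0$ for even $i$ (both values lie in $[0,k/2]$ since $k \ge 4$). Concretely, pick one vertex from $\v_i$ for each odd $i$ and place it in $W_1$; put every other vertex into $W_2$. Then $a_i + a_{i+1} = 1$ is odd for all cyclic $i$, giving an odd-bipartition of $C_m^{k,k/2}$.

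There is no real obstacle here: the proof is essentially a counting argument exploiting the fact that the $(k-2s)$-set attached to each edge is empty when $s=k/2$, so each edge's intersection with $W_1$ is determined purely by the contributions of two consecutive half-edges. The parity of the cyclic sum of odd numbers then does all the work, and the analogy with the bipartiteness of the underlying simple cycle $C_m$ drops out transparently.
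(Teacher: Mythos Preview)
Your proof is correct and follows essentially the same approach as the paper: both arguments track the parity of $|\v_i \cap W_1|$ along the cycle and use that consecutive sums must be odd. The paper phrases this as an alternating-parity chase leading to a contradiction when $m$ is odd, whereas you sum cyclically to get $2\sum_i a_i$ even; the underlying idea is identical.
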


\begin{proof}
Let $C:=C_m^{k,{k \over 2}}$.
We have a partition of $V(C)=V_1 \cup V_2 \cup \cdots \cup V_m$ such that
$e_i:=V_i \cup V_{i+1}$ is an edge of $C$ for $i=1,2,\ldots,m$, where $V_{m+1}=V_1$.
Suppose that $C$ is  odd-bipartite, which has an odd-bipartition.
We color the vertices in one part of the bipartition with red, and color the vertices in the other part with blue.
Note that $e_1=V_1 \cup V_2$ contains an odd number of red vertices.
Without loss of generality, $V_1$ contains an odd number of red vertices.
So $V_2$ contains an even number of red vertices, and then $V_3$ contains an odd number of red vertices by considering the edge $e_2$.
Repeating  the above discussion, we get that $V_m$ contains an odd number of red vertices if $m$ is odd, and even number of red vertices otherwise.
However, if $m$ is odd, then the edge $e_m=V_m \cup V_1$ would contain an even number of red vertices, a contradiction.
So $m$ is necessarily even.
On the other hand, if $m$ is even, it is easy to give an odd-bipartition of $C$.
\end{proof}

For general case, it may not be easy to determine under which conditions an $s$-cycle is odd-bipartite when $\frac{k}{2}<s\leq k-1$.
For example, let $k=4$, a $3$-cycle of length $8$ is odd-bipartite, but a $3$-cycle of length $6$ is non-odd-bipartite.
We will not investigate this problem further in this paper.
By Lemma \ref{OB-Hypercycle}, $C_m^{k,{k \over 2}}$ is non-odd-bipartite if and only if $C_m$ is non-bipartite.
We generalize this fact as follows.

\begin{theorem}\label{NOB}
The hypergraph $G^{k,{k \over 2}}$ is non-odd-bipartite if and only if $G$ is non-bipartite.
\end{theorem}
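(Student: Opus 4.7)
The plan is to prove both directions by relating an odd-bipartition of $G^{k,k/2}$ to an ordinary bipartition of $G$ via parity counting on the half-edges $\mathbf{v}$. This generalises directly the argument used in Lemma \ref{OB-Hypercycle}, where the parity of $|V_i \cap (\text{red})|$ was propagated around the cycle.

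For the easy direction, I assume $G$ is bipartite with parts $A \cup B$. Since $k$ is even, $k-1$ is odd, so the construction is: put all of $\mathbf{v}$ into $V_1$ whenever $v \in A$; for $v \in B$, place a single (arbitrarily chosen) vertex of $\mathbf{v}$ into $V_2$ and the remaining $k/2 - 1$ into $V_1$. Every edge of $G^{k,k/2}$ has the form $\mathbf{u} \cup \mathbf{v}$ with $u \in A$, $v \in B$, and so contributes exactly one vertex to $V_2$ and $k-1$ vertices to $V_1$, both odd. Hence $V_1 \cup V_2$ is an odd-bipartition.

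For the converse, I suppose $G^{k,k/2}$ admits an odd-bipartition $V_1 \cup V_2$ and define $f \colon V(G) \to \{0,1\}$ by
\[
f(v) = |\mathbf{v} \cap V_1| \pmod 2.
\]
For any edge $\{u,v\} \in E(G)$, the corresponding hyperedge $\mathbf{u} \cup \mathbf{v}$ meets $V_1$ in exactly $|\mathbf{u} \cap V_1| + |\mathbf{v} \cap V_1|$ vertices, and by hypothesis this number is odd. Therefore $f(u) \neq f(v)$, and $f$ is a proper $2$-colouring of $G$, showing $G$ is bipartite. Contrapositively, if $G$ is non-bipartite, then $G^{k,k/2}$ is non-odd-bipartite.

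There is no real obstacle; the only point to be careful about is keeping track of parities (using that $k$ itself is even, so that if $\mathbf{u}\cup\mathbf{v}$ has an odd number in $V_1$, it automatically has an odd number in $V_2$). Isolated vertices of $G$ cause no issue, since the odd-bipartite condition imposes no constraint on them. Thus the entire argument reduces to translating the edge-parity condition on $G^{k,k/2}$ into a $\mathbb{Z}/2$-valued function on $V(G)$ that serves as a bipartition, and vice versa.
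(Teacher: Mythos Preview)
Your proof is correct. The construction for the direction ``$G$ bipartite $\Rightarrow$ $G^{k,k/2}$ odd-bipartite'' is the same as the paper's: both pick one vertex from each half edge on one side of the bipartition and place it alone in $V_2$.

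For the converse, however, your argument differs from the paper's. The paper argues structurally: it observes that any cycle of $G^{k,k/2}$ is of the form $C_m^{k,k/2}$, notes that an odd-bipartition of $G^{k,k/2}$ restricts to one of $C_m^{k,k/2}$, and then invokes Lemma~\ref{OB-Hypercycle} to conclude that $m$ is even, whence every cycle of $G$ is even and $G$ is bipartite. You instead read off a $2$-colouring of $G$ directly from the odd-bipartition via the parity map $f(v)=|\mathbf{v}\cap V_1|\bmod 2$. Your route is more elementary and self-contained: it bypasses Lemma~\ref{OB-Hypercycle} entirely and does not require the characterisation of bipartite graphs by absence of odd cycles. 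The paper's route, on the other hand, makes the r\^ole of Lemma~\ref{OB-Hypercycle} explicit as the base case and emphasises that the obstruction to odd-bipartiteness is localised in odd cycles.
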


\begin{proof}
We prove an equivalent assertion: $G^{k,{k \over 2}}$ is odd-bipartite if and only if $G$ is bipartite.
Assume that $G^{k,{k \over 2}}$ is odd-bipartite.
If $G$ is a forest, surely it is bipartite.
Otherwise, any cycle of $G^{k,{k \over 2}}$ must has the form $C_m^{k,{k \over 2}}$ for some positive integer $m$.
 Then $C_m^{k,{k \over 2}}$ is also odd-bipartite and hence $C_m$ is bipartite by Lemma \ref{OB-Hypercycle}.
So, $G$ is bipartite.

On the contrary, assume that $G$ is bipartite, with a bipartition $(V_1,V_2)$.
Extend this bipartition to a bipartition $(\mathbf{V}_1,\mathbf{V}_2)$ of $G^{k,{k \over 2}}$,
  that is, $\mathbf{V}_1$ (respectively, $\mathbf{V}_2$) is obtained by replacing each vertex in $V_1$ (respectively $V_2$) by the corresponding half edge.
Choosing an arbitrary vertex from each half edge in $\mathbf{V}_1$ and forming a new set $U_1$,
then $(U_1, V(G^{k,{k \over 2}})\backslash U_1)$ is an odd-bipartition of $G^{k,{k \over 2}}$.
\end{proof}

\section{Spectral radii and eigenvectors of hypergraphs}

For integers $k\geq 3$ and $n\geq 2$,
  a real {\it tensor} (also called {\it hypermatrix}) $\mathcal{T}=(t_{i_{1}\ldots i_{k}})$ of order $k$ and dimension $n$ refers to a
  multidimensional array with entries $t_{i_{1}\ldots i_{k}}$ such that $t_{i_{1}\ldots i_{k}}\in \mathbb{R}$ for all $i_{j}\in [n]$ and $j\in [k]$.
 The tensor $\mathcal{T}$ is called \textit{symmetric} if its entries are invariant under any permutation of their indices.
 Given a vector $x\in \mathbb{R}^{n}$, $\mathcal{T}x^{k}$ is a real number, and $\mathcal{T}x^{k-1}$ is an $n$-dimensional vector, which are defined as follows:
   $$\mathcal{T}x^{k}=\sum_{i_1,i_{2},\ldots,i_{k}\in [n]}t_{i_1i_{2}\ldots i_{k}}x_{i_1}x_{i_{2}}\cdots x_{i_k},~
   (\mathcal{T}x^{k-1})_i=\sum_{i_{2},\ldots,i_{k}\in [n]}t_{ii_{2}\ldots i_{k}}x_{i_{2}}\cdots x_{i_k} \mbox{~for~} i \in [n].$$
 Let $\mathcal{I}$ be the {\it identity tensor} of order $k$ and dimension $n$, that is, $i_{i_{1}i_2 \ldots i_{k}}=1$ if and only if
   $i_{1}=i_2=\cdots=i_{k} \in [n]$ and zero otherwise.

\begin{definition}{\em \cite{Qi2}} Let $\mathcal{T}$ be a $k$th order n-dimensional real tensor.
For some $\lambda \in \mathbb{C}$, if the polynomial system $(\lambda \mathcal{I}-\mathcal{T})x^{k-1}=0$, or equivalently $\mathcal{T}x^{k-1}=\lambda x^{[k-1]}$, has a solution $x\in \mathbb{C}^{n}\backslash \{0\}$,
then $\lambda $ is called an eigenvalue of $\mathcal{T}$ and $x$ is an eigenvector of $\mathcal{T}$ associated with $\lambda$,
where $x^{[k-1]}:=(x_1^{k-1}, x_2^{k-1},\ldots,x_n^{k-1}) \in \mathbb{C}^n$.
\end{definition}

If $x$ is a real eigenvector of $\mathcal{T}$, surely the corresponding eigenvalue $\lambda$ is real.
In this case, $x$ is called an {\it $H$-eigenvector} and $\lambda$ is called an {\it $H$-eigenvalue}.
Furthermore, if $x\in \mathbb{R}_{+}^{n}$ (the set of nonnegative vectors of dimension $n$), then $\lambda $ is called an {\it $H^{+}$-eigenvalue} of $\mathcal{T}$;
if $x\in \mathbb{R}_{++}^{n}$ (the set of positive vectors of dimension $n$), then $\lambda$ is said to be an {\it $H^{++}$-eigenvalue} of $\mathcal{T}$.
The {\it spectral radius of $\T$} is defined as
$$\rho(\T)=\max\{|\lambda|: \lambda \mbox{ is an eigenvalue of } \T \}.$$

To generalize the classical Perron-Frobenius Theorem from nonnegative matrices to nonnegative tensors, we need the definition of the irreducibility of tensor.
Chang et al. \cite{CPZ} introduced the irreducibility of tensor. A tensor $\T=(t_{i_{1}...i_{k}})$ of order $k$ and dimension $n$ is called {\it reducible} if there exists a nonempty proper subset $I \subsetneq [n]$ such that
$t_{i_1i_2\ldots i_k}=0$ for any $i_1 \in I$ and any $i_2,\ldots,i_k \notin I$.
If $\T$ is not reducible, then it is called {\it irreducible}.

Friedland et al. \cite{FGH} proposed a weak version of irreducible nonnegative tensors $\T$.
The graph associated with $\T$, denoted by $G(\T)$, is the directed graph with vertices $1, \ldots, n$ and an edge from $i$ to $j$
 if and only if $t_{i_1i_2\ldots i_k}>0$ for some $i_l = j$, $l = 2, \ldots, m$.
The tensor $\T$ is called {\it weakly irreducible} if $G(\T)$ is strongly connected.
Surely, an irreducible tensor is always weakly irreducible.
Pearson and Zhang \cite{PZ} proved that the adjacency tensor of a uniform hypergraph $G$ is weakly irreducible if and only if $G$ is connected.
Clearly, this shows that if $G$ is connected, then $\mathcal{A}(G), \mathcal{L}(G)$ and $\mathcal{Q}(G)$ are all weakly irreducible.

\begin{theorem}\label{PF} {\em \bf (The Perron-Frobenius Theorem for Nonnegative Tensors)}

1. {\em (Yang and Yang 2010 \cite{YY})} If $\T$ is a nonnegative tensor of order $k$ and dimension $n$, then $\rho(\T)$ is an $H^+$-eigenvalue of $\T$.

2. {\em (Frieland, Gaubert and Han 2011 \cite{FGH})} If furthermore $\T$ is weakly irreducible, then $\rho(\T)$ is the unique $H^{++}$-eigenvalue of $\T$,
with the unique eigenvector $x \in \mathbb{R}_{++}^{n}$, up to a positive scaling coefficient.

3. {\em(Chang, Pearson and Zhang 2008 \cite{CPZ})} If moreover $\T$ is irreducible, then $\rho(\T)$ is the unique $H^{+}$-eigenvalue of $\T$,
with the unique eigenvector $x \in \mathbb{R}_{+}^{n}$, up to a positive scaling coefficient.

\end{theorem}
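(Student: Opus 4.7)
The statement is a compilation of three separately attributed results, so rather than a single unified argument I would outline one coherent framework that recovers all three parts, namely the nonlinear Perron--Frobenius approach based on the map
$$F(x) = \bigl(\T x^{k-1}\bigr)^{[1/(k-1)]}$$
on the nonnegative orthant, together with a Collatz--Wielandt characterization of $\rho(\T)$.

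\textbf{Part 1.} The plan is to obtain a nonnegative eigenvector by a fixed-point argument. Consider the normalized map $\tilde F(x) = F(x)/\|F(x)\|_1$ on the simplex $\Delta=\{x\in\mathbb{R}_+^n: \sum x_i = 1\}$. One cannot apply Brouwer directly on $\Delta$ because $F$ may vanish on the boundary, so I would first perturb: set $\T_\epsilon = \T + \epsilon \mathcal{E}$ where $\mathcal{E}$ is the all-ones tensor (which is irreducible). Then $\tilde F_\epsilon$ is continuous on $\Delta$, Brouwer gives a fixed point $x_\epsilon$, hence an $H^+$-eigenvalue $\lambda_\epsilon$ of $\T_\epsilon$. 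By standard Collatz--Wielandt bounds $\lambda_\epsilon = \max_{x\in \mathbb{R}_{++}^n} \min_i (\T_\epsilon x^{k-1})_i/x_i^{k-1}$ and one checks $\lambda_\epsilon = \rho(\T_\epsilon)$. Passing to a convergent subsequence as $\epsilon\to 0^+$ yields a nonnegative eigenvector of $\T$ with eigenvalue $\lim \lambda_\epsilon$, which one identifies with $\rho(\T)$ via the upper semicontinuity of the spectral radius together with the Collatz--Wielandt sup.

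\textbf{Parts 2 and 3.} Here the issue shifts from existence to uniqueness. I would endow the interior of the positive cone with the Hilbert projective metric $d_H(x,y)=\log\max_i(x_i/y_i)-\log\min_i(x_i/y_i)$. A direct computation shows $F$ is nonexpansive in $d_H$ for any nonnegative $\T$, and weak irreducibility upgrades this to a strict contraction on projective classes: the key combinatorial input is that strong connectivity of $G(\T)$ forces the two indices achieving the max and min ratios to be linked by a directed path along which the inequality tightens. A contraction mapping argument (projectively) then gives uniqueness of the positive eigenvector and hence part 2. For part 3, irreducibility rules out boundary eigenvectors: if a nonnegative eigenvector $x$ had a zero coordinate, let $I=\{i: x_i=0\}$; from $\T x^{k-1}=\rho x^{[k-1]}$ and $i\in I$ one deduces $t_{i\,i_2\dots i_k}=0$ whenever $i_2,\dots,i_k\notin I$, contradicting irreducibility. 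So every $H^+$-eigenvector is in fact $H^{++}$, and part 2 finishes part 3.

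\textbf{Main obstacle.} The difficult step is the contraction property in $d_H$ under the weak (rather than strong) irreducibility assumption: the graph-theoretic analysis needed to convert strong connectivity of $G(\T)$ into a strict decrease of $d_H\bigl(F(x),F(y)\bigr)$, and the care required to handle the fact that $d_H$ is only a pseudometric on the positive cone and a metric on rays. Since this theorem is already established in the three cited references, in practice I would simply invoke those papers in the paper itself rather than reproduce the argument; the sketch above is the conceptual route I would follow if forced to give a self-contained proof.
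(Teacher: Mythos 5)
The paper offers no proof of this theorem: it is quoted verbatim as a known result, with each part attributed to its source (\cite{YY}, \cite{FGH}, \cite{CPZ}), exactly as you suggest doing in your closing remark. So there is no in-paper argument to compare against, and citing is the correct move here.

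Since you did sketch an argument, one substantive warning about it: the mechanism you propose for Part 2 --- that weak irreducibility upgrades nonexpansiveness of $F$ in the Hilbert projective metric to a \emph{strict} contraction --- is false, already in the matrix case $k=2$. The matrix $\left(\begin{smallmatrix}0&1\\1&0\end{smallmatrix}\right)$ is irreducible (hence weakly irreducible), but the induced map on rays is an isometry of the Hilbert metric, not a contraction; uniqueness of its positive eigenvector has to come from somewhere else. The actual proof in \cite{FGH} takes a logarithmic change of variables to convert the eigenproblem into a fixed-point problem for an additively homogeneous order-preserving map and then invokes nonlinear Perron--Frobenius theory, where strong connectivity of $G(\T)$ enters through a Collatz--Wielandt-type max--min analysis rather than through Banach contraction. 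Your Part 1 perturbation argument and your Part 3 support argument (a zero coordinate of a nonnegative eigenvector forces a reducing index set $I$) are both sound and are essentially the arguments in \cite{YY} and \cite{CPZ} respectively.
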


\begin{theorem} \label{ineq1}{\em \cite{YY,YY3}}
Let $\mathcal{B},\mathcal{C}$ be order $k$ dimension $n$ tensors satisfying $|\mathcal{B}| \le \mathcal{C}$, where  $\mathcal{C}$ is weakly irreducible.
Let $\beta$ be an eigenvalue of $\mathcal{B}$. Then

(1) $|\beta| \le \rho(\mathcal{C})$.

(2) if $\beta=\rho(\mathcal{C})e^{i \varphi}$ and $y$ is corresponding eigenvector, then
all entries of $y$ are nonzero, and $\mathcal{C}=e^{-i \varphi}\mathcal{B} \cdot D^{-(k-1)} \cdot \overbrace{ D \cdots D }^{k-1}$,
where $D=\hbox{diag}(\frac{y_1}{|y_1|},\frac{y_2}{|y_2|},\ldots,\frac{y_n}{|y_n|})$.

\end{theorem}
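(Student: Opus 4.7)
The plan is to mimic the classical matrix Perron--Frobenius equality analysis and transplant it to the tensor setting by invoking the Collatz--Wielandt type characterization $\rho(\mathcal{C}) = \sup_{x>0} \min_i (\mathcal{C}x^{k-1})_i / x_i^{k-1}$ that accompanies Theorem~\ref{PF} for weakly irreducible nonnegative tensors.

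For part (1), I would take moduli componentwise in the eigenvalue equation $\mathcal{B}y^{k-1}=\beta y^{[k-1]}$ and combine the triangle inequality with the hypothesis $|\mathcal{B}|\le\mathcal{C}$ to conclude
\[
\mathcal{C}|y|^{k-1} \ge |\beta|\,|y|^{[k-1]}
\]
entry-wise. If $|y|$ were strictly positive, the Collatz--Wielandt formula would immediately yield $\rho(\mathcal{C})\ge |\beta|$. To cope with possible zero coordinates of $|y|$, I would perturb $\mathcal{C}$ by $\epsilon$ times the all-ones tensor (which is positive, hence weakly irreducible) and apply the positive case to $|y|+\epsilon\mathbf{1}$, then let $\epsilon\to 0$ using continuity of the spectral radius.

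For part (2), assume $|\beta|=\rho(\mathcal{C})$. The inequality from step (1) must be an equality at every coordinate, otherwise a strict gap together with a small positive perturbation would exhibit a test vector violating the definition of $\rho(\mathcal{C})$ as a supremum. Hence $|y|$ is a nonnegative $H$-eigenvector of $\mathcal{C}$ at $\rho(\mathcal{C})$, and weak irreducibility forces $|y|>0$: otherwise the support $\{i:|y_i|>0\}$ would be a proper closed vertex set of $G(\mathcal{C})$, contradicting strong connectedness. With $|y|>0$ in hand, I write $y_j=|y_j|e^{i\theta_j}$ and $D=\hbox{diag}(e^{i\theta_1},\ldots,e^{i\theta_n})$. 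Equality in the triangle inequality forces every nonzero summand $b_{ii_2\cdots i_k}y_{i_2}\cdots y_{i_k}$ to share the common argument $\varphi+(k-1)\theta_i$, and equality of the modulus sums combined with $|\mathcal{B}|\le\mathcal{C}$ and $|y|>0$ forces $|b_{ii_2\cdots i_k}|=c_{ii_2\cdots i_k}$ throughout. Packaging the modulus and phase information yields
\[
b_{i_1 i_2\cdots i_k} = e^{i\varphi}\, c_{i_1 i_2\cdots i_k}\, e^{i(k-1)\theta_{i_1}}\, e^{-i\theta_{i_2}}\cdots e^{-i\theta_{i_k}},
\]
which rearranges into the displayed diagonal similarity $\mathcal{C} = e^{-i\varphi}\mathcal{B}\cdot D^{-(k-1)}\cdot D\cdots D$.

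The step I expect to be the main obstacle is the passage from ``$|y|$ is a nonnegative eigenvector at $\rho(\mathcal{C})$'' to ``$|y|$ is strictly positive'' under the weaker hypothesis of weak (rather than full) irreducibility: it requires the combinatorial argument on the digraph $G(\mathcal{C})$ sketched above, or equivalently a careful appeal to the uniqueness clause in Theorem~\ref{PF}(2). Once positivity is secured, the modulus/argument bookkeeping producing the diagonal similarity is essentially formal, and the $\epsilon$-perturbation used in part (1) is routine.
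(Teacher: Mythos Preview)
The paper does not supply its own proof of this theorem: the statement is quoted from the references \cite{YY,YY3} and used as a black box. So there is no in-paper argument to compare your proposal against; your outline is essentially the standard Yang--Yang proof those references contain.

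Two small technical remarks on the proposal itself. In part~(1), the perturbation step is imprecise as written: replacing $|y|$ by $|y|+\epsilon\mathbf{1}$ destroys the inequality $\mathcal{C}x^{k-1}\ge|\beta|\,x^{[k-1]}$, so you cannot simply ``apply the positive case'' to that vector. The clean route is to invoke the Collatz--Wielandt formula in the form $\rho(\mathcal{C})=\max_{x\gneq 0}\min_{x_i>0}(\mathcal{C}x^{k-1})_i/x_i^{k-1}$ (this is exactly Theorem~5.3 of \cite{YY}, which the paper itself cites in the proof of Corollary~\ref{rho-main}); then $|y|\gneq 0$ is already an admissible test vector and no perturbation is needed. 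In part~(2), the passage from $\mathcal{C}|y|^{k-1}\ge\rho(\mathcal{C})|y|^{[k-1]}$ to equality is again precisely Lemma~3.5 of \cite{YY3} (also used in the paper's Corollary~\ref{rho-main}); your ``strict gap plus perturbation'' sketch is morally that lemma, but it is worth naming it explicitly since the weak-irreducibility hypothesis enters there. After that, your positivity-via-support argument and the phase/modulus bookkeeping are correct and standard.
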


\begin{corollary}\label{ineq2}
Suppose $0 \le \mathcal{B} \lneq \mathcal{C}$, where  $\mathcal{C}$ is weakly irreducible.
Then $\rho(\mathcal{B}) < \rho(\mathcal{C})$.
\end{corollary}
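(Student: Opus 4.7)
The plan is to derive the strict inequality by contradiction, combining parts of Theorem \ref{PF} with both statements of Theorem \ref{ineq1}.

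First, since $\mathcal{B}\ge 0$ we have $|\mathcal{B}|=\mathcal{B}\le \mathcal{C}$, so Theorem \ref{ineq1}(1) immediately gives $\rho(\mathcal{B})\le \rho(\mathcal{C})$. The real work is to rule out equality. Suppose for contradiction that $\rho(\mathcal{B})=\rho(\mathcal{C})$. By Theorem \ref{PF}(1), applied to the nonnegative tensor $\mathcal{B}$, the value $\rho(\mathcal{B})$ is an $H^{+}$-eigenvalue, so there exists a nonzero nonnegative eigenvector $y\in \mathbb{R}_{+}^{n}$ with $\mathcal{B}y^{k-1}=\rho(\mathcal{B})\,y^{[k-1]}$.

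Next I would invoke Theorem \ref{ineq1}(2) with $\beta=\rho(\mathcal{B})=\rho(\mathcal{C})$. Writing $\beta=\rho(\mathcal{C})e^{i\varphi}$ forces $\varphi=0$, so the theorem asserts that every entry of $y$ is nonzero; combined with $y\ge 0$ this yields $y\in \mathbb{R}_{++}^{n}$. Consequently the diagonal matrix $D=\mathrm{diag}\bigl(\tfrac{y_1}{|y_1|},\ldots,\tfrac{y_n}{|y_n|}\bigr)$ equals the identity $I$, and the structural conclusion of Theorem \ref{ineq1}(2) collapses to
\[
\mathcal{C}=e^{-i\varphi}\mathcal{B}\cdot D^{-(k-1)}\cdot\overbrace{D\cdots D}^{k-1}=\mathcal{B}.
\]
This contradicts the hypothesis $\mathcal{B}\lneq \mathcal{C}$, completing the argument.

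The step I expect to require the most care is making sure that Theorem \ref{ineq1}(2) genuinely applies: one must check that $\rho(\mathcal{B})$, viewed as an eigenvalue of $\mathcal{B}$, corresponds via the hypothesis $|\mathcal{B}|\le \mathcal{C}$ to an eigenvalue of modulus $\rho(\mathcal{C})$, i.e.\ that the role of ``$\beta$'' in that theorem is played by $\rho(\mathcal{B})$. Once this matching is set up, everything else is book-keeping: the positivity of $y$ forces $D=I$, and then the factorization identity degenerates to $\mathcal{C}=\mathcal{B}$, which is exactly the contradiction we need. No further tensor manipulation or spectral machinery is required.
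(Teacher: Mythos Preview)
Your proof is correct and follows essentially the same route as the paper: use Theorem~\ref{PF}(1) to produce a nonnegative eigenvector $y$ for $\rho(\mathcal{B})$, apply Theorem~\ref{ineq1}(1) for the weak inequality, and in the equality case invoke Theorem~\ref{ineq1}(2) to force $y>0$, $D=I$, and hence $\mathcal{B}=\mathcal{C}$. The paper's argument is simply a more compressed version of what you wrote.
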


\begin{proof}
By Theorem \ref{PF}(1), $\rho(\mathcal{B})$ is an eigenvalue of $\mathcal{B}$, with a nonnegative eigenvector $y$.
By Theorem \ref{ineq1}, $\rho(\mathcal{B}) \le \rho(\mathcal{C})$.
If $\rho(\mathcal{B}) = \rho(\mathcal{C})$, then, also by Theorem \ref{ineq1}, $y>0$, and hence $\mathcal{B} = \mathcal{C}$; a contradiction.
\end{proof}

\begin{corollary}\label{ineq3}
Suppose $G$ is a connected $k$-uniform hypergraph and $H$ is a proper sub-hypergraph of $G$.
Then $\rho(\mathcal{A}(H)) < \rho(\mathcal{A}(G))$ and $\rho(\mathcal{Q}(H)) < \rho(\mathcal{Q}(G))$.
\end{corollary}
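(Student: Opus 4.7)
The plan is to reduce this corollary directly to Corollary~\ref{ineq2}. Since the tensors $\mathcal{A}(H)$ and $\mathcal{A}(G)$ (and similarly for $\mathcal{Q}$) have different dimensions when $V(H) \subsetneq V(G)$, the first step is to embed $\mathcal{A}(H)$ into an $n$-dimensional tensor $\tilde{\mathcal{A}}(H)$ (where $n=|V(G)|$) by padding with zeros in the slices indexed by vertices of $V(G)\setminus V(H)$. A short argument via the eigenvalue equation $\tilde{\mathcal{A}}(H)x^{k-1}=\lambda x^{[k-1]}$ shows that any nonzero eigenvalue of $\tilde{\mathcal{A}}(H)$ forces $x_i=0$ for $i\notin V(H)$, which then restricts to an eigenvalue equation for $\mathcal{A}(H)$; conversely every eigenvalue of $\mathcal{A}(H)$ lifts by zero-extension. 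Hence $\rho(\tilde{\mathcal{A}}(H))=\rho(\mathcal{A}(H))$, and the same reasoning applies to $\mathcal{Q}$.

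Next I would verify the strict entrywise inequality $0 \le \tilde{\mathcal{A}}(H) \lneq \mathcal{A}(G)$. Because $H$ is a proper sub-hypergraph of $G$, either $E(H)\subsetneq E(G)$ directly, or $V(H)\subsetneq V(G)$; in the latter case, connectedness of $G$ (together with $|V(G)|\ge 2$) implies that each vertex in $V(G)\setminus V(H)$ lies in some edge of $G$ that cannot be an edge of $H$, so in both cases $E(H)\subsetneq E(G)$. This missing edge contributes a positive entry to $\mathcal{A}(G)$ that vanishes in $\tilde{\mathcal{A}}(H)$, giving the strict inequality. Since $G$ is connected, by Pearson--Zhang $\mathcal{A}(G)$ is weakly irreducible, so Corollary~\ref{ineq2} yields $\rho(\mathcal{A}(H))=\rho(\tilde{\mathcal{A}}(H))<\rho(\mathcal{A}(G))$.

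For the signless Laplacian part I would repeat the same argument with $\mathcal{Q}=\mathcal{D}+\mathcal{A}$. The embedding $\tilde{\mathcal{Q}}(H)$ again preserves spectral radius. The inequality $\tilde{\mathcal{Q}}(H)\le \mathcal{Q}(G)$ holds entrywise because the degree of every $v\in V(H)$ in $H$ is at most its degree in $G$ (as $E(H)\subseteq E(G)$), and the diagonal entries for vertices outside $V(H)$ are zero in $\tilde{\mathcal{Q}}(H)$ but positive in $\mathcal{Q}(G)$ (again by connectedness). Strictness follows as before: any edge $e\in E(G)\setminus E(H)$ contributes both off-diagonal and diagonal positive entries absent from $\tilde{\mathcal{Q}}(H)$. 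Since $\mathcal{Q}(G)$ is weakly irreducible whenever $G$ is connected, Corollary~\ref{ineq2} delivers $\rho(\mathcal{Q}(H))<\rho(\mathcal{Q}(G))$.

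The main obstacle is bookkeeping rather than mathematical depth: carefully justifying that zero-padding the tensor of $H$ to dimension $n$ preserves its spectral radius, and nailing down the case analysis showing that $H$ being a proper sub-hypergraph of a connected $G$ actually forces $E(H)\subsetneq E(G)$. Once those two points are in place, the strict-monotonicity machinery of Corollary~\ref{ineq2} does all the heavy lifting, and no further tensor-theoretic input is required.
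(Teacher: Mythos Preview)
Your proposal is correct and follows essentially the same route as the paper: the paper also handles the case $V(H)\subsetneq V(G)$ by adjoining the vertices of $V(G)\setminus V(H)$ to $H$ as isolated vertices to obtain $H'$ with $\rho(\mathcal{A}(H'))=\rho(\mathcal{A}(H))$ (citing \cite{ZQW} for this equality rather than arguing it directly via the eigenvalue equation as you do), and then applies Corollary~\ref{ineq2} to $\mathcal{A}(H')\lneq\mathcal{A}(G)$. Your write-up is in fact more explicit than the paper's about why $E(H)\subsetneq E(G)$ must hold and about the signless Laplacian case, which the paper dismisses as ``similar''.
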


\begin{proof}
We only consider the adjacency tensor. The other case can be discussed in a similar manner.
Observe that $\mathcal{A}(G)$ is weakly irreducible.
First assume $V(H)=V(G)$.
Then  $\mathcal{A}(H)) \lneq \mathcal{A}(G)$, which implies the result by Corollary \ref{ineq2}.
Secondly assume $V(H) \subsetneq V(G)$.
Add the isolated vertices of $V(G) \backslash V(H)$ to $H$ such that the resulting hypergraph, say $H'$, has the same vertex set as $G$.
Then $\rho(\mathcal{A}(H)) = \rho(\mathcal{A}(H'))$; or see \cite[Theroem 3.2]{ZQW}.
Noting that $\mathcal{A}(H')) \lneq \mathcal{A}(G)$, we also get the result.
\end{proof}

Let $\mathcal{B} \ge 0$. Let $x \in \mathbb{R}^n_{++}$.
Denote $$r_i(\mathcal{B})=\sum_{i_2,\ldots,i_k=1}^n b_{ii_2 \ldots i_k}, \ s_i(\mathcal{B},x)=\frac{(\mathcal{B}x^{k-1})_i}{x_i^{k-1}},
 \hbox{~for~} i=1,2,\ldots,n.$$
The following two results give bounds for the spectral radius $\rho(\mathcal{B})$ of a general nonnegative tensor $\mathcal{B}$.
Here we impose an additional condition on $\mathcal{B}$, that is, $\mathcal{B}$ is weakly irreducible, and characterize the equality cases.

\begin{lemma}{\em \cite[Lemma 5.2]{YY}}\label{rho-r}
Let $\mathcal{B} \ge 0$.
Then
$$ \min_{1 \le i \le n} r_i(\mathcal{B}) \le \rho(\mathcal{B}) \le  \max_{1 \le i \le n} r_i(\mathcal{B}). \eqno(3.1)$$
\end{lemma}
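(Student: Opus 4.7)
The plan is to sandwich $\rho(\mathcal{B})$ by feeding a nonnegative Perron eigenvector into the eigen-equation and reading off the extremal coordinate.

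For the upper bound I would apply Theorem \ref{PF}(1) to obtain a nonnegative eigenvector $y \ge 0$, $y \ne 0$, with $\mathcal{B}y^{k-1} = \rho(\mathcal{B})\, y^{[k-1]}$. Select a coordinate $t$ achieving $y_t = \max_i y_i > 0$ and write the $t$-th component of the eigen-equation, using $y_{i_j} \le y_t$ for each factor:
$$\rho(\mathcal{B})\, y_t^{k-1} \;=\; \sum_{i_2,\ldots,i_k} b_{t i_2 \cdots i_k}\, y_{i_2}\cdots y_{i_k} \;\le\; y_t^{k-1}\sum_{i_2,\ldots,i_k} b_{t i_2 \cdots i_k} \;=\; y_t^{k-1}\, r_t(\mathcal{B}).$$
Dividing by $y_t^{k-1}$ gives $\rho(\mathcal{B}) \le r_t(\mathcal{B}) \le \max_i r_i(\mathcal{B})$.

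For the lower bound a parallel argument at the minimum coordinate of $y$ would work if $y$ were strictly positive, but in general $y$ may have zero entries. To sidestep this, I would use a perturbation: set $\mathcal{B}_\varepsilon := \mathcal{B} + \varepsilon\,\mathcal{J}$ for $\varepsilon > 0$, where $\mathcal{J}$ is the all-ones tensor of order $k$ and dimension $n$. Then $\mathcal{B}_\varepsilon > 0$ is irreducible, so by Theorem \ref{PF}(3) it has a positive Perron eigenvector $y^{(\varepsilon)} \in \mathbb{R}^n_{++}$. Picking an index $s$ with $y^{(\varepsilon)}_s = \min_i y^{(\varepsilon)}_i > 0$ and repeating the computation with the reversed inequality yields
$$\rho(\mathcal{B}_\varepsilon)\, (y^{(\varepsilon)}_s)^{k-1} \;=\; \sum_{i_2,\ldots,i_k} (b_{s i_2\cdots i_k} + \varepsilon)\, y^{(\varepsilon)}_{i_2}\cdots y^{(\varepsilon)}_{i_k} \;\ge\; (y^{(\varepsilon)}_s)^{k-1}\, r_s(\mathcal{B}_\varepsilon),$$
so $\rho(\mathcal{B}_\varepsilon) \ge \min_i r_i(\mathcal{B}_\varepsilon)$. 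Letting $\varepsilon \to 0^+$ and using continuity of both the spectral radius and the row-sums in the entries of the tensor, I recover $\rho(\mathcal{B}) \ge \min_i r_i(\mathcal{B})$.

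The only delicate point is the limiting step $\varepsilon \to 0^+$; the continuity of $\rho$ in the entries of a nonnegative tensor (a consequence of $\rho$ being the largest root of the characteristic polynomial, or more elementarily of the Gelfand-type formula for $\rho(\mathcal{B})$) is standard and well within the scope of \cite{YY}. Everything else is just reading off one coordinate of the eigen-equation, so the proof will be short.
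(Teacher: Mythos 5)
Your proof is correct, but it follows a genuinely different route from the one the paper restates (inside the proof of Lemma \ref{rho-eq}, following \cite{YY}). The paper's argument is by row normalization: for the lower bound one sets $\alpha=\min_i r_i(\mathcal{B})$ (the case $\alpha=0$ being trivial), forms the tensor $\mathcal{C}$ with $c_{i_1i_2\ldots i_k}=\frac{\alpha}{r_{i_1}(\mathcal{B})}\,b_{i_1i_2\ldots i_k}$, so that $0\le\mathcal{C}\le\mathcal{B}$ and every row sum of $\mathcal{C}$ equals $\alpha$, and then concludes $\rho(\mathcal{B})\ge\rho(\mathcal{C})=\alpha$ from the comparison theorem (Theorem \ref{ineq1}(1)) together with the fact that a nonnegative tensor with constant row sums has that constant as its spectral radius; the upper bound is symmetric. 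You instead evaluate the eigen-equation at an extremal coordinate of a Perron vector. Your upper bound, which needs only Theorem \ref{PF}(1) and the positivity of the maximal coordinate, is elementary and if anything simpler than the paper's. For the lower bound you correctly diagnose why the same trick fails (the nonnegative eigenvector may vanish at, or below, the minimizing coordinate) and repair it by passing to $\mathcal{B}_\varepsilon=\mathcal{B}+\varepsilon\mathcal{J}$, which is positive and hence irreducible; this is sound, but it shifts the whole weight of the argument onto the limit $\rho(\mathcal{B}_\varepsilon)\to\rho(\mathcal{B})$. Note that monotonicity already gives $\rho(\mathcal{B}_\varepsilon)\ge\rho(\mathcal{B})$ and monotone decrease as $\varepsilon\downarrow 0$, so the only nontrivial ingredient is the upper semicontinuity $\lim_{\varepsilon\to 0^+}\rho(\mathcal{B}_\varepsilon)\le\rho(\mathcal{B})$; this is exactly the limit formula established in \cite{YY} (and also follows from the continuous dependence of the roots of the monic characteristic polynomial on the entries of the tensor), so the step is legitimate but deserves a precise citation rather than a wave at ``standard facts.'' As for what each approach buys: the paper's normalization sets up the equality analysis of Lemma \ref{rho-eq} directly (equality forces $\mathcal{B}=\mathcal{C}$ and hence constant row sums via Corollary \ref{ineq2}), whereas your perturbation argument does not localize the equality case as cleanly; on the other hand, your argument dispenses with the constant-row-sum lemma entirely and, for the upper bound, with the comparison theorem as well.
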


\begin{lemma}{\em \cite[Lemma 5.3]{YY}}\label{rho-s}
Let $\mathcal{B} \ge 0$, and let $x \in \mathbb{R}^n_{++}$.
Then
$$ \min_{1 \le i \le n} s_i(\mathcal{B},x) \le \rho(\mathcal{B}) \le  \max_{1 \le i \le n} s_i(\mathcal{B},x).\eqno(3.2)$$
\end{lemma}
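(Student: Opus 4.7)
The plan is to reduce the inequality to Lemma \ref{rho-r} via a diagonal similarity transformation of the tensor $\mathcal{B}$. Using the positive vector $x$, let $D=\mathrm{diag}(x_1,\ldots,x_n)$ and define a new nonnegative tensor $\mathcal{B}'=(b'_{i i_2 \cdots i_k})$ of order $k$ and dimension $n$ by
$$b'_{i i_2 \cdots i_k} \;=\; x_i^{-(k-1)}\, b_{i i_2 \cdots i_k}\, x_{i_2} x_{i_3} \cdots x_{i_k}.$$
A direct computation then yields
$$r_i(\mathcal{B}') \;=\; \sum_{i_2,\ldots,i_k=1}^{n} b'_{i i_2 \cdots i_k} \;=\; \frac{(\mathcal{B}x^{k-1})_i}{x_i^{k-1}} \;=\; s_i(\mathcal{B},x),$$
so the row sums of $\mathcal{B}'$ coincide with the quantities we wish to bound.

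Next I would show $\rho(\mathcal{B})=\rho(\mathcal{B}')$ by exhibiting an eigenvalue-preserving correspondence between $\mathcal{B}$ and $\mathcal{B}'$. If $(\lambda,z)$ is an eigenpair of $\mathcal{B}$ with $z \in \mathbb{C}^n\setminus\{0\}$, set $y=D^{-1}z$, i.e. $y_j=z_j/x_j$, so that $x_j y_j=z_j$ for each $j$. Then
$$(\mathcal{B}' y^{k-1})_i \;=\; x_i^{-(k-1)} \sum_{i_2,\ldots,i_k} b_{i i_2 \cdots i_k}\,(x_{i_2}y_{i_2})\cdots(x_{i_k}y_{i_k}) \;=\; x_i^{-(k-1)} (\mathcal{B}z^{k-1})_i \;=\; \lambda\, y_i^{k-1},$$
showing that $(\lambda,y)$ is an eigenpair of $\mathcal{B}'$. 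The reverse substitution $z=Dy$ gives the converse, so $\mathcal{B}$ and $\mathcal{B}'$ share the same spectrum, and in particular $\rho(\mathcal{B})=\rho(\mathcal{B}')$.

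Finally, applying Lemma \ref{rho-r} to the nonnegative tensor $\mathcal{B}'$ yields
$$\min_{1 \le i \le n} r_i(\mathcal{B}') \;\le\; \rho(\mathcal{B}') \;\le\; \max_{1 \le i \le n} r_i(\mathcal{B}'),$$
and substituting $r_i(\mathcal{B}')=s_i(\mathcal{B},x)$ and $\rho(\mathcal{B}')=\rho(\mathcal{B})$ gives exactly the bound (3.2). The only delicate point is the index bookkeeping for an order-$k$ tensor: the normalization factor $x_i^{-(k-1)}$ on the ``row'' index must balance the $k-1$ scaling factors $x_{i_2},\ldots,x_{i_k}$ attached to the remaining indices. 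Once this is verified, the argument is a clean reduction to the row-sum bound and requires no appeal to weak irreducibility of $\mathcal{B}$.
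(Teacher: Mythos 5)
Your proof is correct and follows essentially the same route as the paper: in the proof of Lemma \ref{rho-eq} the authors restate the argument for (3.2) by forming $\mathcal{E}=\mathcal{B}\cdot D^{-(k-1)}\cdot D\cdots D$ with $D=\mathrm{diag}(x_1,\ldots,x_n)$, noting $r_i(\mathcal{E})=s_i(\mathcal{B},x)$ and $\rho(\mathcal{E})=\rho(\mathcal{B})$, and then applying (3.1). The only difference is that you verify the spectrum-preserving substitution $y=D^{-1}z$ by direct computation, whereas the paper cites it from the literature; both are fine.
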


\begin{lemma}\label{rho-eq}
Let $\mathcal{B} \ge 0$ and $x \in \mathbb{R}^n_{++}$. 
Suppose that $\mathcal{B}$ is weakly irreducible.
Then either equality in (3.1) holds if and only if $r_1(\mathcal{B})=r_2(\mathcal{B})=\cdots=r_n(\mathcal{B})$;
either equality in (3.2) holds if and only if $\mathcal{B}x^{k-1}=\rho(\mathcal{B})x^{[k-1]}$.
\end{lemma}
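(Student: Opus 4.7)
My plan is to prove the second equivalence (involving $s_i$) first, and then extract the first equivalence (involving $r_i$) by specializing to $x = \mathbf{1}$, since $s_i(\mathcal{B},\mathbf{1}) = r_i(\mathcal{B})$. The sufficiency direction is easy in both parts: if $\mathcal{B}x^{k-1} = \rho(\mathcal{B}) x^{[k-1]}$ then every $s_i(\mathcal{B},x)$ equals $\rho(\mathcal{B})$, so both sides of (3.2) collapse to $\rho(\mathcal{B})$; and if all $r_i(\mathcal{B})$ share a common value $c$, then $\mathbf{1}$ is a positive eigenvector with eigenvalue $c$, so Theorem \ref{PF}(2) forces $c = \rho(\mathcal{B})$ and (3.1) collapses. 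All the work is therefore in the necessity direction of (3.2).

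Suppose $\max_i s_i(\mathcal{B},x) = \rho(\mathcal{B})$, equivalently $\mathcal{B}x^{k-1} \le \rho(\mathcal{B}) x^{[k-1]}$ componentwise; the lower-bound case is entirely symmetric with inequalities reversed. Let $y \in \mathbb{R}^n_{++}$ be the positive Perron eigenvector furnished by Theorem \ref{PF}(2), so $\mathcal{B}y^{k-1} = \rho(\mathcal{B}) y^{[k-1]}$, and set $t = \min_i x_i/y_i > 0$, attained at some index $i_0$. Then $x \ge ty$ componentwise with equality at $i_0$. Monotonicity of $\mathcal{B} \ge 0$ gives $\mathcal{B}x^{k-1} \ge t^{k-1}\mathcal{B}y^{k-1} = \rho(\mathcal{B}) t^{k-1} y^{[k-1]}$, which combined with the hypothesis yields the sandwich
$$\rho(\mathcal{B}) t^{k-1} y_{i_0}^{k-1} \le (\mathcal{B}x^{k-1})_{i_0} \le \rho(\mathcal{B}) x_{i_0}^{k-1} = \rho(\mathcal{B}) t^{k-1} y_{i_0}^{k-1},$$
so $(\mathcal{B}x^{k-1})_{i_0} = t^{k-1}(\mathcal{B}y^{k-1})_{i_0}$. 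Writing this as an equality between two sums of nonnegative terms $b_{i_0 i_2 \ldots i_k} x_{i_2}\cdots x_{i_k}$ and $b_{i_0 i_2 \ldots i_k} t^{k-1} y_{i_2}\cdots y_{i_k}$, and recalling that each factor satisfies $x_{i_j}/y_{i_j} \ge t$, one sees that whenever $b_{i_0 i_2 \ldots i_k} > 0$ every ratio $x_{i_j}/y_{i_j}$ must equal $t$.

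Hence every out-neighbor of $i_0$ in the directed graph $G(\mathcal{B})$ lies in the set $S = \{j : x_j/y_j = t\}$. For any new $j \in S$, the identity $x_j = ty_j$ feeds the same sandwich and yields $(\mathcal{B}x^{k-1})_j = t^{k-1}(\mathcal{B}y^{k-1})_j$, so iteratively $S$ is closed under taking out-neighbors in $G(\mathcal{B})$. Strong connectivity of $G(\mathcal{B})$, guaranteed by weak irreducibility of $\mathcal{B}$, forces $S = [n]$, whence $x = ty$ is an eigenvector of $\mathcal{B}$ for $\rho(\mathcal{B})$.

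The main obstacle is precisely this propagation step: upgrading a single scalar equality at $i_0$ into a termwise equality inside the sum defining $(\mathcal{B}x^{k-1})_{i_0}$, and then chaining these equalities along the arcs of $G(\mathcal{B})$. Weak irreducibility is indispensable here, for without strong connectivity the conclusion would only propagate inside the strongly connected component containing $i_0$. Once the $s_i$-equivalence is in hand, the $r_i$-equivalence follows immediately by applying it to $x = \mathbf{1}$: equality in (3.1) is then equivalent to $\mathbf{1}$ being an eigenvector of $\mathcal{B}$, which in turn is equivalent to all $r_i(\mathcal{B})$ being equal, with the common value necessarily $\rho(\mathcal{B})$ by Theorem \ref{PF}(2).
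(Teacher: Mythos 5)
Your proof is correct, but it takes a genuinely different route from the paper's. The paper proves the $r_i$-equivalence first: it rescales $\mathcal{B}$ into a tensor $\mathcal{C}$ with $c_{i_1\ldots i_k}=\frac{\alpha}{r_{i_1}(\mathcal{B})}b_{i_1\ldots i_k}$ having constant row sums $\alpha=\min_i r_i$, notes $\rho(\mathcal{C})=\alpha$, and invokes the strict comparison result (Corollary \ref{ineq2}, i.e.\ $0\le\mathcal{B}'\lneq\mathcal{C}'$ with $\mathcal{C}'$ weakly irreducible implies strict inequality of spectral radii) to force $\mathcal{B}=\mathcal{C}$ at equality; it then deduces the $s_i$-equivalence from the $r_i$-one via the diagonal similarity $\mathcal{E}=\mathcal{B}\cdot D^{-(k-1)}\cdot D\cdots D$, which preserves the spectrum and turns $s_i(\mathcal{B},x)$ into $r_i(\mathcal{E})$. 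You go in the opposite direction: a direct Collatz--Wielandt-style argument for the $s_i$-equivalence using the positive Perron vector $y$, the extremal ratio $t=\min_i x_i/y_i$, termwise equality in a sum of nonnegative terms, and propagation of the set $S=\{j: x_j=ty_j\}$ along out-neighbors in the strongly connected digraph $G(\mathcal{B})$; the $r_i$-case then falls out by taking $x=\mathbf{1}$. Your propagation step is sound (each factor $x_{i_j}/(ty_{i_j})\ge 1$ with product $1$ forces all factors to be $1$), and the symmetry claim for the other inequality is legitimate. What each approach buys: the paper's is shorter given the imported machinery (Corollary \ref{ineq2} and the similarity invariance of the spectrum from \cite{YY3}), while yours is more self-contained, needing only Theorem \ref{PF}(2) and strong connectivity, and it sidesteps the $\alpha=0$ edge case the paper must handle separately. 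Both are valid proofs of the lemma.
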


\begin{proof}
For completeness we restate the proof of (3.1) and (3.2) as in \cite{YY}.
We first consider the equality cases of (3.1).
Let $\alpha=\min_{1 \le i \le n} r_i$.
If $\alpha=0$, surely $\rho(\mathcal{B}) \ge \alpha=0$, and $\rho(\mathcal{B})=0$ if and only if $\mathcal{B}=0$ (see \cite[Theorem 3.1]{ZQW}).
So we assume that $\alpha>0$.
Let $\mathcal{C}$ be a tensor with the same order and dimension as $\mathcal{B}$ whose entries are defined as
$c_{i_1i_2 \ldots i_k}=\frac{\alpha}{r_{i_1}(\mathcal{B})}b_{i_1i_2 \ldots i_k}$.
Then $0 \le \mathcal{C} \le \mathcal{B}$, and by Theorem \ref{ineq1}(1), $\rho(\mathcal{B}) \ge \rho(\mathcal{C})$.
In addition, $r_i(\mathcal{C})=\alpha$ for each $i=1,2,\ldots,n$, which implies $\rho(\mathcal{C})=\alpha$ (see \cite[Lemma 5.1]{YY}).
So we get $\rho(\mathcal{B}) \ge \rho(\mathcal{C})=\alpha.$
If $\rho(\mathcal{B}) =\alpha$, then $\rho(\mathcal{B}) =\rho(\mathcal{C})$, and then $\mathcal{B} =\mathcal{C}$ by Corollary \ref{ineq2}.
This implies that $r_i(\mathcal{B})=\alpha$ for each $i=1,2,\ldots,n$, and the necessity holds.
The sufficiency is easily verified by  \cite[Lemma 5.1]{YY}.
For the right equality of (3.1), the proof is similar.

Next we consider the equality cases of (3.2).
Let $D=\hbox{diag}(x_1,x_2,\ldots,x_n)$, and let $\mathcal{E}=\mathcal{B} \cdot D^{-(k-1)} \cdot \overbrace{ D \cdots D }^{k-1}$.
Then $\mathcal{E}$ and $\mathcal{B}$ have the same eigenvalues (\cite[Theorem 2.7]{YY3}), which yields $\rho(\mathcal{B})=\rho(\mathcal{E})$.
In addition, $\mathcal{E}$ is also weakly irreducible.
Noting that $r_i(\mathcal{E})=s_i(\mathcal{B},x)$ for $i=1,2,\ldots,n$.
By (3.1), $$ \min_{1 \le i \le n}s_i(\mathcal{B},x)=\min_{1 \le i \le n} r_i(\mathcal{E}) \le \rho(\mathcal{E}) \le \max_{1 \le i \le n} r_i(\mathcal{E})
= \max_{1 \le i \le n}s_i(\mathcal{B},x). $$
If the right equality holds, then all $r_i(\mathcal{E})$, and hence all $s_i(\mathcal{B},x)$, have the same value, i.e. $\mathcal{B}x^{k-1}=\rho(\mathcal{B})x^{[k-1]}$.
The proof for the right equality is similar.
\end{proof}

\begin{corollary}\label{rho-main}
Suppose that $\mathcal{B}$ is a weakly irreducible nonnegative tensor.
If there exists a vector $y \gneq 0$ such that $\mathcal{B}y^{k-1} \lneq \mu y^{[k-1]}$ (respectively, $\mathcal{B}y^{k-1} \gneq \mu y^{[k-1]}$),
then $\rho(\mathcal{B}) < \mu$ (respectively, $\rho(\mathcal{B})> \mu$).
\end{corollary}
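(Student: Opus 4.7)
The plan is to deduce the corollary as a short consequence of Lemmas \ref{rho-s} and \ref{rho-eq}. For a strictly positive vector $y$, the componentwise relation $\mathcal{B}y^{k-1} \lneq \mu y^{[k-1]}$ is exactly the statement that $s_i(\mathcal{B},y) \le \mu$ for every $i \in [n]$, with strict inequality at some index; the $\gneq$ case is interpreted symmetrically. So the corollary reduces to squeezing strictness out of the estimates in (3.2) via the equality characterization in Lemma \ref{rho-eq}.

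For the $\lneq$ case, I would first apply the upper bound in Lemma \ref{rho-s} to obtain $\rho(\mathcal{B}) \le \max_i s_i(\mathcal{B},y) \le \mu$, which already yields the weak form of the conclusion. To upgrade to strict inequality, I would argue by contradiction: if $\rho(\mathcal{B}) = \mu$, then $\max_i s_i(\mathcal{B},y) = \rho(\mathcal{B})$, which is the right equality in (3.2); the equality clause of Lemma \ref{rho-eq} then forces $\mathcal{B}y^{k-1} = \rho(\mathcal{B}) y^{[k-1]} = \mu y^{[k-1]}$ in every coordinate, contradicting the hypothesis that the componentwise inequality is strict somewhere. Hence $\rho(\mathcal{B}) < \mu$. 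The $\gneq$ case is handled by the mirror argument: the lower bound $\min_i s_i(\mathcal{B},y) \le \rho(\mathcal{B})$ from Lemma \ref{rho-s} gives $\rho(\mathcal{B}) \ge \mu$, and the left equality clause of Lemma \ref{rho-eq} rules out the equality case in exactly the same way.

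The main subtlety I anticipate is the mismatch between the stated hypothesis $y \gneq 0$ and the strictly positive hypothesis that Lemmas \ref{rho-s} and \ref{rho-eq} actually require, since the ratios $s_i(\mathcal{B},y)$ are undefined at coordinates where $y_i = 0$. In the applications that appear later in the paper the vector $y$ will in fact be strictly positive (for example, a Perron eigenvector supplied by Theorem \ref{PF}), and the two-step deduction above applies verbatim. If one wants the literal statement with $y$ only nonnegative and nonzero, the extra work is to use the weak irreducibility of $\mathcal{B}$ to either rule out zero coordinates of $y$ under the given coordinate inequality, or to perturb $y$ into $\mathbb{R}^n_{++}$ in a way that preserves the strict component; modulo this positivity issue, the proof itself is a one-paragraph application of the two preceding lemmas.
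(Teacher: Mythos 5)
Your argument for the $\lneq$ direction is essentially the paper's: bound $\rho(\mathcal{B})$ by $\max_i s_i(\mathcal{B},y)\le\mu$ via Lemma \ref{rho-s}, then use the equality characterization of Lemma \ref{rho-eq} to rule out $\rho(\mathcal{B})=\mu$. The positivity issue you flag there is real but benign: the paper disposes of it by showing $y>0$ follows from $\mathcal{B}y^{k-1}\lneq\mu y^{[k-1]}$ together with weak irreducibility (citing the argument of Theorem 1.4(1) of \cite{CPZ}); if some set $I$ of coordinates of $y$ vanished, the inequality would force $b_{ii_2\ldots i_k}=0$ for $i\in I$ and $i_2,\ldots,i_k\notin I$, contradicting irreducibility. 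You should carry out (or at least cite) this step rather than leave it "modulo the positivity issue," since Lemmas \ref{rho-s} and \ref{rho-eq} are simply inapplicable without it.

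The genuine gap is in the $\gneq$ direction. There the mirror argument does not go through, because a vector with $\mathcal{B}y^{k-1}\gneq\mu y^{[k-1]}$ need \emph{not} be strictly positive and cannot be forced to be: a zero coordinate $y_i=0$ only gives $(\mathcal{B}y^{k-1})_i\ge 0$, which carries no information. (For $k=2$, take $\mathcal{B}$ the adjacency matrix of an edge, $y=(1,0)^{T}$, $\mu=0$: the hypothesis holds, $y$ has a zero entry, and $s_2(\mathcal{B},y)$ is undefined.) Your two suggested repairs fail as stated: zero coordinates cannot be ruled out, and perturbing $y$ to $y+\varepsilon\mathbf{1}$ increases the right-hand side $\mu(y+\varepsilon\mathbf{1})^{[k-1]}$ as well, so the inequality is not preserved. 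This is exactly why the paper abandons the symmetric route for this half and instead invokes the Collatz--Wielandt formula $\rho(\mathcal{B})=\max_{x\gneq 0}\min_{x_i>0}s_i(\mathcal{B},x)$ (Theorem 5.3 of \cite{YY}), in which the minimum is taken only over the positive coordinates, to get $\rho(\mathcal{B})\ge\mu$; the equality case is then excluded by Lemma 3.5 of \cite{YY3}, which says that for weakly irreducible $\mathcal{B}$ the relation $\mathcal{B}y^{k-1}\gneq\rho(\mathcal{B})y^{[k-1]}$ is impossible. You need some substitute for these two inputs --- for instance, restricting to the principal subtensor on the support of $y$ and comparing with $\mathcal{B}$ via Corollary \ref{ineq2} --- before the second half of your proof is complete.
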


\begin{proof}
Assume that $\mathcal{B}y^{k-1} \lneq \mu y^{[k-1]}$.
By a similar discussion as in the proof of Theroem 1.4 (1) of \cite{CPZ}, we get $y >0$.
From the inequality and by Lemma \ref{rho-s}, $$\rho(\mathcal{B}) \le \max_{1 \le i \le n} s_i(\mathcal{B},y) \le \mu.$$
If $\rho(\mathcal{B}) = \mu$, then  $\rho(\mathcal{B}) = \max_{1 \le i \le n} s_i(\mathcal{B},y)$, which implies that $\mathcal{B}y^{k-1}=\rho(\mathcal{B})y^{[k-1]}$,
a contradiction to the assumption.

Next we assume that $\mathcal{B}y^{k-1} \gneq \mu y^{[k-1]}$.
By Theorem 5.3 of \cite{YY},
$$\rho(\mathcal{B})=\max_{x \gneq 0} \min_{x_i>0} s_i(\mathcal{B},x) \ge \mu.$$
If $\rho(\mathcal{B})= \mu$, then $\mathcal{B}y^{k-1} \gneq \rho(\mathcal{B}) y^{[k-1]}$, which implies that
$\mathcal{B}y^{k-1}= \rho(\mathcal{B}) y^{[k-1]}$ by Lemma 3.5 of \cite{YY3} as $\mathcal{B}$ is weakly irreducible; a contradiction.
\end{proof}

For the adjacency tensor of a $k$-uniform hypergraph $G$,
the eigenvector equation $\mathcal{A}(G)x^{k-1}=\lambda x^{[k-1]}$ could be interpreted as
$$ \lambda x_u^{k-1}= \sum_{\{u,u_2,u_3,\ldots, u_k\} \in E(G)} x_{u_2}x_{u_3} \cdots x_{u_k}, \mbox{~for each~} u \in V(G).\eqno(3.3)$$
The eigenvector equation $\mathcal{Q}(G)x^{k-1}=\lambda x^{[k-1]}$ could be interpreted as
$$ [\lambda-d(u)] x_u^{k-1}= \sum_{\{u,u_2,u_3,\ldots, u_k\} \in E(G)} x_{u_2}x_{u_3} \cdots x_{u_k}, \mbox{~for each~} u \in V(G).\eqno(3.4)$$

A hypergraph $G$ is {\it isomorphic} to a hypergraph $H$, if there exists a bijection $\sigma: V(G) \rightarrow V(H)$ such that $
\{v_{1},v_{2},\ldots,v_{k}\} \in E(G)$ if and only if $\{ \sigma(v_{1}),\sigma(v_{2}),\ldots,\sigma(v_{k})\} \in E(H)$.
The bijection $\sigma$ is called an {\it isomorphism} of $G$  and $H$.
If $G=H$, then $\sigma$ is called an {\it automorphism} of $G$.
Let $x$ be a vector defined on $V(G)$. Denote $x_\sigma$ to be the vector such that $(x_{\sigma})_u=x_{\sigma(u)}$ for each $u \in V(G)$.

\begin{lemma}\label{AL-EV}Let $G$ be a $k$-uniform hypergraph and $\sigma $ be an automorphism of $G$.
Let $x$ be an eigenvector of $\mathcal{A}(G)$ (respectively, $\mathcal{L}(G)$, $\mathcal{Q}(G)$) associated with an eigenvalue $\lambda$.
Then $x_{\sigma} $ is also an eigenvector of $\mathcal{A}(G)$ (respectively, $\mathcal{L}(G)$,  $\mathcal{Q}(G)$)  associated with $\lambda$.
\end{lemma}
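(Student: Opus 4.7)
The plan is to apply the eigenvector equations (3.3) and (3.4) at the vertex $\sigma(v)$ and then use the fact that an automorphism permutes edges bijectively, so that the sum appearing on the right-hand side can be re-indexed to match the corresponding sum for $x_\sigma$ at vertex $v$. Because the definition of $x_\sigma$ is $(x_\sigma)_u = x_{\sigma(u)}$, computing the tensor action on $x_\sigma$ naturally pulls out values of $x$ at images $\sigma(u_j)$, and the automorphism property converts these back to a sum ranging over edges incident with $\sigma(v)$.

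More concretely, first I would treat the adjacency case. Fix $v \in V(G)$ and compute
\[
(\mathcal{A}(G)\, x_\sigma^{k-1})_v = \sum_{\{v,v_2,\ldots,v_k\}\in E(G)} (x_\sigma)_{v_2}\cdots (x_\sigma)_{v_k}
= \sum_{\{v,v_2,\ldots,v_k\}\in E(G)} x_{\sigma(v_2)}\cdots x_{\sigma(v_k)}.
\]
Since $\sigma$ is an automorphism, the map $\{v,v_2,\ldots,v_k\} \mapsto \{\sigma(v),\sigma(v_2),\ldots,\sigma(v_k)\}$ is a bijection between edges containing $v$ and edges containing $\sigma(v)$. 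Substituting $u_j = \sigma(v_j)$ therefore turns the sum into $\sum_{\{\sigma(v),u_2,\ldots,u_k\}\in E(G)} x_{u_2}\cdots x_{u_k}$, which by (3.3) applied at the vertex $\sigma(v)$ equals $\lambda\, x_{\sigma(v)}^{k-1} = \lambda (x_\sigma)_v^{k-1}$. Thus $\mathcal{A}(G)\,x_\sigma^{k-1} = \lambda\, x_\sigma^{[k-1]}$.

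For the signless Laplacian case I would run the identical calculation with (3.4) in place of (3.3), using the one extra observation that an automorphism preserves vertex degrees: $d(\sigma(v)) = d(v)$. Hence the term $[\lambda - d(\sigma(v))]\, x_{\sigma(v)}^{k-1}$ on the right of (3.4) at $\sigma(v)$ coincides with $[\lambda - d(v)]\, (x_\sigma)_v^{k-1}$, giving $\mathcal{Q}(G)\, x_\sigma^{k-1} = \lambda\, x_\sigma^{[k-1]}$. The Laplacian case follows by the identity $\mathcal{L}(G) = \mathcal{D}(G) - \mathcal{A}(G)$, either by combining the two previous computations or by repeating the argument verbatim with the sign change.

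There is no genuine obstacle here: the whole proof is essentially a change-of-variable in a finite sum, justified by the defining property of an automorphism. The only points to watch are notational, namely keeping the distinction between evaluating the tensor action at $v$ versus at $\sigma(v)$, and noticing that $(x_\sigma)_v^{k-1} = x_{\sigma(v)}^{k-1}$ so that the $[k-1]$-power operation commutes with the permutation action, which is immediate from the entrywise definition.
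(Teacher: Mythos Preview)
Your proof is correct and follows essentially the same approach as the paper: compute $(\mathcal{A}(G)x_\sigma^{k-1})_v$ via (3.3), re-index the sum using the bijection on edges induced by the automorphism $\sigma$, apply the eigenvector equation at $\sigma(v)$, and handle $\mathcal{L}(G)$ and $\mathcal{Q}(G)$ via the degree-preservation $d(v)=d(\sigma(v))$. The paper's argument is identical up to notation.
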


\begin{proof}
Let $u\in V(G)$ be an arbitrary but fixed vertex. By Eq. (3.3), we have
\begin{eqnarray*}
(\mathcal{A}(G)x_{\sigma }^{k-1})_{u} &=&\sum_{\{u,u_{2},\ldots,u_{k}\} \in E(G)}(x_{\sigma})_{u_{2}}(x_{\sigma})_{u_{3}}\cdots(x_{\sigma })_{u_{k}} \\
&=&\sum_{\{u,u_{2},\ldots,u_{k}\} \in E(G) }x_{\sigma(u_{2})}x_{\sigma(u_{3})}\cdots x_{\sigma (u_{k})}\\
&=&\sum_{\{ \sigma(u),\sigma(u_{2}),\ldots,\sigma(u_{k})\} \in E(G)} x_{\sigma(u_{2})}x_{\sigma (u_{3})}\cdots x_{\sigma (u_{k})}\\
&=&\lambda x_{\sigma(u)}^{k-1} \\
&=&\lambda (x_{\sigma})_{u}^{k-1},
\end{eqnarray*}
where the fourth equality is obtained from the eigenvector equation.
Hence $x_{\sigma }$ is also an eigenvector of $\mathcal{A}(G)$  associated with the eigenvalue $\lambda$.
The proof for $\mathcal{L}(G)$ and $\mathcal{Q}(G)$ is similar by the fact $d_u=d_{\sigma(u)}$ for each $u \in V(G)$.
\end{proof}

\begin{lemma}\label{AL-PV}Let $G$ be a connected simple graph, and let $x>0$ be an eigenvector of
$\mathcal{A}(G^{k,{k \over 2}})$ (respectively, $\mathcal{Q}(G^{k,{k \over 2}})$). If $u$ and $v$ are the vertices in the same half edge of $G^{k,{k \over 2}}$, then $x_{u}=x_{v}$.
\end{lemma}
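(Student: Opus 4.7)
The plan is to exploit the symmetry of $G^{k,{k \over 2}}$ within each half edge, together with the uniqueness part of the Perron--Frobenius theorem for weakly irreducible nonnegative tensors. The key observation is that every permutation of the $k/2$ vertices inside a single half edge is an automorphism of $G^{k,{k \over 2}}$: since each edge of $G^{k,{k \over 2}}$ has the form $\mathbf{p}\cup\mathbf{q}$ for some $pq\in E(G)$, permuting the members of a half edge $\mathbf{w}$ leaves every such edge-set unchanged. In particular, the transposition $\sigma$ that swaps $u$ and $v$ and fixes every other vertex of $G^{k,{k \over 2}}$ is an automorphism.

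First I would apply Lemma \ref{AL-EV}: the vector $x_\sigma$ is an eigenvector of $\mathcal{A}(G^{k,{k \over 2}})$ (respectively $\mathcal{Q}(G^{k,{k \over 2}})$) associated with the same eigenvalue $\lambda$ as $x$. Since $x>0$ and $\sigma$ is a bijection on $V(G^{k,{k \over 2}})$, the vector $x_\sigma$ is also positive.

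Because $G$ is connected, so is $G^{k,{k \over 2}}$, and hence $\mathcal{A}(G^{k,{k \over 2}})$ and $\mathcal{Q}(G^{k,{k \over 2}})$ are both weakly irreducible, as recalled in the paper. By part (2) of Theorem \ref{PF}, the spectral radius is the unique eigenvalue admitting a positive eigenvector, and that eigenvector is unique up to positive scaling. Therefore $\lambda=\rho$, and there exists $c>0$ with $x_\sigma=cx$.

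Finally I would pin down $c$ by using the fact that $\sigma$ is an involution: applying $\sigma$ to $x_\sigma=cx$ gives $x=(x_\sigma)_\sigma=c\,x_\sigma=c^2x$, so $c^2=1$ and $c=1$. Evaluating $x_\sigma=x$ at the vertex $u$ then yields $x_v=x_{\sigma(u)}=(x_\sigma)_u=x_u$, as desired. No step poses a real obstacle once the automorphism observation is in place; the only minor care needed is the verification that Perron--Frobenius uniqueness applies to $\mathcal{Q}(G^{k,{k \over 2}})$ as well as to $\mathcal{A}(G^{k,{k \over 2}})$, but this is already covered by the paper's remark that connectedness of $G^{k,{k \over 2}}$ makes the signless Laplacian tensor weakly irreducible.
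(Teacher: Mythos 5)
Your proof is correct and follows essentially the same route as the paper: identify the transposition of $u$ and $v$ as an automorphism, invoke Lemma \ref{AL-EV} to see that $x_\sigma$ is again a positive eigenvector, and conclude via the uniqueness part of Theorem \ref{PF}(2). Your additional step pinning down the scaling constant $c$ via $\sigma^2=\mathrm{id}$ is a small refinement the paper leaves implicit, but it does not change the argument.
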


\begin{proof}
Let $\sigma$ be a permutation of $V(G^{k,{k \over 2}})$ such that it interchanges $u$ and $v$ and fix all other vertices.
It is easily seen that $\sigma$ is an automorphism of $G^{k,{k \over 2}}$.
Then by Lemma \ref{AL-EV}, $x_\sigma$ is also an eigenvector of $\mathcal{A}(G^{k,{k \over 2}})$ (respectively, $\mathcal{Q}(G^{k,{k \over 2}})$).
By Theorem \ref{PF}(2), $\mathcal{A}(G^{k,{k \over 2}})$ (respectively, $\mathcal{Q}(G^{k,{k \over 2}})$) has a unique $H^{++}$-eigenvector up to a multiple, so $x_\sigma=x$, which implies the result.
\end{proof}

Let $G,x$ be defined as in Lemma \ref{AL-PV}.
We will use $x_\v$ to denote the common value of the vertices in the half edge $\v$.

\begin{lemma} \label{eqradius}
Let $G$ be a connected simple graph, and let $x>0$ be vector defined on $V(G)$.
Let $\x>0$ be a vector defined on $V(G^{k,{k \over 2}})$ such that $\x_u=x_v^{2 \over k}$ for each vertex $u \in \v$.
Then $x$ is an eigenvector of $A(G)$ (respectively, $\mathcal{Q}(G)$) corresponding to the spectral radius $\rho$ if and only if $\x$ is an
eigenvector of $A(G^{k,{k \over 2}})$ (respectively, $\mathcal{Q}(G^{k,{k \over 2}})$) corresponding to the spectral radius $\rho$.
Hence $\rho(A(G))=\rho(\mathcal{A}(G^{k,{k \over 2}}))$ and $\rho(Q(G))=\rho(\mathcal{Q}(G^{k,{k \over 2}}))$.
\end{lemma}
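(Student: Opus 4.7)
The plan is to verify the eigenvector equation on $G^{k,{k \over 2}}$ by direct substitution, and then invoke the uniqueness of the positive Perron eigenvector (Theorem \ref{PF}(2)) to identify it with the spectral radius.

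First I would note that since $G$ is connected, so is $G^{k,{k \over 2}}$, and hence both $\mathcal{A}(G^{k,{k \over 2}})$ and $\mathcal{Q}(G^{k,{k \over 2}})$ are weakly irreducible. By Theorem \ref{PF}(2) each possesses a unique positive eigenvector (up to scaling), attached precisely to its spectral radius. The same conclusion holds for the irreducible nonnegative matrices $A(G)$ and $Q(G)$ by the classical Perron--Frobenius theorem.

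Next I would substitute $\x_w = x_v^{2/k}$ (for each $w$ lying in the half edge $\v$) into the eigenvector equation (3.3) at an arbitrary vertex $w \in \v$. Each edge of $G^{k,{k \over 2}}$ through $w$ has the form $\u\v$ with $u$ a neighbor of $v$ in $G$, so the product of $\x$ over the $k-1$ remaining vertices consists of $k/2$ copies of $x_u^{2/k}$ and $k/2 - 1$ copies of $x_v^{2/k}$. The exponent $2/k$ is chosen precisely so that, after dividing through by the common factor $x_v^{(k-2)/k}$, equation (3.3) collapses to
\begin{equation*}
\rho\, x_v \;=\; \sum_{u \sim v} x_u,
\end{equation*}
which is the eigenvector equation $A(G)x = \rho x$. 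For the signless Laplacian case I would first observe that the edges of $G^{k,{k \over 2}}$ incident to $w \in \v$ are in bijection with the edges of $G$ incident to $v$, so $d_{G^{k,{k \over 2}}}(w) = d_G(v)$; the same substitution applied to (3.4) then reduces to $Q(G)x = \rho x$.

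Finally, I would close the equivalence using Perron--Frobenius in both directions: if $x > 0$ is a Perron eigenvector of $A(G)$, then the associated $\x > 0$ satisfies the tensor eigenvector equation with the same $\rho$, and by Theorem \ref{PF}(2) this forces $\rho = \rho(\mathcal{A}(G^{k,{k \over 2}}))$; conversely, the Perron vector $\x$ of $\mathcal{A}(G^{k,{k \over 2}})$ is constant on each half edge by Lemma \ref{AL-PV}, so it arises from some positive $x$ on $V(G)$ via the rule $\x_w = x_v^{2/k}$, and the same computation runs in reverse. The equality of spectral radii then drops out, and the $\mathcal{Q}$ case is completely parallel. There is no substantive obstacle; the only delicate moment is the choice of the exponent $2/k$, dictated by the requirement that $2(k-1)/k - (k-2)/k = 1$ so that the tensor equation reduces to a linear identity in the $x_v$'s.
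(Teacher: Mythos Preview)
Your proposal is correct and follows essentially the same route as the paper: both verify by direct substitution that, under $\x_w = x_v^{2/k}$, the tensor eigenvector equation (3.3) (respectively (3.4)) at any $w\in\v$ is equivalent to $A(G)x=\rho x$ (respectively $Q(G)x=\rho x$). Your write-up is more careful than the paper's in making explicit the appeal to Theorem~\ref{PF}(2) to pin $\rho$ as the spectral radius on the hypergraph side, and in invoking Lemma~\ref{AL-PV} for the converse; the paper simply records the chain of equivalent equations and leaves these identifications implicit.
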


\begin{proof}
Let $\x_\v$ be the common value of the vertices in $\v$ given by $\x$.
The result follows by the following equivalent equations:
$$ \rho x_u = \sum_{uv \in E(G)} x_v \Leftrightarrow \rho \x_\u^{k \over 2} = \sum_{\u\v \in E(G^{k,{k \over 2}})} \x_\v^{k \over 2}
\Leftrightarrow  \rho \x_\u^{k-1} =\sum_{\u\v \in E(G^{k,{k \over 2}})}\x_\u^{{k \over 2}-1} \x_\v^{k \over 2}, $$
$$ [\rho-d(u)] x_u = \sum_{uv \in E(G)} x_v \Leftrightarrow  [\rho-d(u)] \x_\u^{k \over 2} = \sum_{\u\v \in E(G^{k,{k \over 2}})} \x_\v^{k \over 2}
\Leftrightarrow   [\rho-d(u)] \x_\u^{k-1} =\sum_{\u\v \in E(G^{k,{k \over 2}})}\x_\u^{{k \over 2}-1} \x_\v^{k \over 2}, $$
\end{proof}

Lemma \ref{eqradius} establishes a relationship between the adjacency or signless Laplacian spectral radii of the simple graphs $G$
  and those of a class of hypergraphs $G^{k,{k \over 2}}$.
So, the results involving the adjacency or signless Laplacian spectral radii of the simple graphs hold for such kind of hypergraphs.

%
%
%

\section{Minimum spectral radius and smallest limit point}
Let $P_n,C_n$ be the (simple) path and cycle of order $n$, respectively.
Denote $\Gn$ (respectively,$\Gnb$) the class of simple connected graphs (respectively, non-bipartite graphs) of order $n$.
Denote $\Gnk=\{G^{k,{k \over 2}}: G  \in \Gn\}$ and $\Gnobk=\{G^{k,{k \over 2}}: G  \in \Gnb \}$.
By Lemma \ref{eqradius}, for a connected graph $G$, $\rho(A(G))=\rho(\mathcal{A}(G^{k,{k \over 2}}))$ and $\rho(Q(G))=\rho(\mathcal{Q}(G^{k,{k \over 2}}))$.
So, the problem of finding the hypergraphs with minimal spectral radius of the adjacency or signless Laplacian tensor among all graphs in $\Gnk$ (respectively, $\Gnobk$)
is equivalent to that of finding the simple graphs with minimal spectral radius of the adjacency or signless Laplacian matrix  among all graphs in $\Gn$ (respectively, $\Gnb$).
The results on the limit points of the adjacency or signless Laplacian spectral radii of simple graphs $G$ also hold for the hypergraphs $G^{k,{k \over 2}}$.

  Feng et.al \cite{fenglz} showed that among all graphs in $\Gnb$, the minimum adjacency spectral radius is achieved by $C_n$ for odd $n$, and by $C_{n-1}+e$ for even $n$,
  where $C_{n-1}+e$ denotes the graph obtained from $C_{n-1}$ by appending a pendant edge at some vertex.
Similar result holds for the minimum signless Laplacian spectral radius.
The proof technique is involved with the perturbation of the spectral radius of a graph after one of its edges is subdivided.

Let $G$ be a simple graph containing an edge $uw$.
Denote by $G_{u,w}$ the graph obtained from $G$ by {\it subdividing the edge} $uw$, that is, by inserting a
  new vertex say $v$ and forming two new edges $uv$ and $vw$ instead of the original edge $uw$.
An {\it internal path $P$} of $G$ is a sequence of edges $u_{1},u_{2},\ldots,u_{l}$, such that
all $u_{i}$ are distinct (except possibly $u_{1}=u_{l}$), $u_{i}u_{i+1}$ is an edge of $G$ for $i=1,2,\ldots,l-1$,
$d(u_{1})\ge 3$, $d(u_{2})=\cdots=d(u_{l-1})=2$ (unless $l=2$), and $d(u_{l})\ge 3$.
Hoffman and  Smith \cite{HS} gave the following result.

\begin{lemma}{\em \cite{HS}} \label{interpathA}
Let $G$ be a simple connected graph of order $n$.
If $uw$ is an edge of $G$ not on any internal path, and $G \ne C_n$, then $\rho(A(G_{u,w})>\rho(A(G))$.
If $uw$ is an edge of $G$  on an internal path, and $G \ne T_n$, then $\rho(A(G_{u,w})<\rho(A(G))$, where $T_n$ is obtained from $P_{n-4}$ by
appending two pendant edges at each of its two end points.
\end{lemma}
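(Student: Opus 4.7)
The plan is to compare $\rho := \rho(A(G))$ and $\rho' := \rho(A(G_{u,w}))$ via the Perron--Frobenius theorem for the adjacency matrix of a connected graph, treating the two halves of the lemma separately by exhibiting suitable positive test vectors and then upgrading the resulting weak inequality to a strict one by ruling out the exceptional graph. In both cases I would use the matrix version of Corollary~\ref{rho-main}: if a weakly irreducible nonnegative matrix $B$ admits $z>0$ with $Bz\lneq \mu z$ (respectively $Bz\gneq \mu z$), then $\rho(B)<\mu$ (respectively $\rho(B)>\mu$).

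For the case in which $uw$ is not on any internal path and $G\neq C_n$, I would take the Perron eigenvector $x>0$ of $A(G)$ and extend it to a positive vector $\tilde x$ on $V(G_{u,w})$ by assigning some value $\tilde x_v$ to the new subdivision vertex $v$. The hypothesis that $uw$ avoids every internal path means that the edge is either pendant or has an endpoint adjacent to a pendant tree, providing local flexibility that lets me perturb the values of $\tilde x$ on $\{u,v,w\}$ so that $A(G_{u,w})\tilde x\gneq \rho\,\tilde x$, which forces $\rho' > \rho$. The cycle $C_n$ must be excluded because its dihedral automorphism group makes all edges equivalent, and indeed $\rho(C_n)=\rho(C_{n+1})=2$, so strict inequality fails only on $C_n$.

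For the case in which $uw$ lies on an internal path and $G\neq T_n$, I would take the Perron eigenvector $y>0$ of $A(G_{u,w})$ and ``fold'' it back to $V(G)$, defining $\tilde y_a=y_a$ for $a\notin\{u,w\}$ and setting $\tilde y_u=y_u+\alpha y_v$, $\tilde y_w=y_w+\beta y_v$ for nonnegative coefficients $\alpha,\beta$ that absorb the subdivision vertex. Using the eigenvector identity $\rho' y_v=y_u+y_w$ together with the fact that the internal-path region forces an essentially sinusoidal pattern on $y$, one can tune $\alpha,\beta$ to achieve $A(G)\tilde y\gneq \rho'\tilde y$, whence $\rho>\rho'$. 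The Smith graph $T_n$ is exactly the configuration where the sinusoidal balance forces equality at every step of the folding, preventing strict inequality.

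The main obstacle is precisely the strictness: both constructions naturally yield $\geq$, and sharpening to $>$ requires verifying that the test vector is not itself a Perron eigenvector of the target graph. This reduces to the classification of connected graphs with $\rho(A)=2$ (the Smith graphs $C_n,\,T_n,\,\tilde E_6,\,\tilde E_7,\,\tilde E_8$), since these are the only graphs on which subdivision operations can preserve the spectral radius. A cleaner route that avoids perturbative fine-tuning is to use the characteristic-polynomial identity
\[
\phi(G_{u,w},\lambda)=\lambda\,\phi(G-uw,\lambda)-\phi(G-uw-u,\lambda)-\phi(G-uw-w,\lambda),
\]
obtained by cofactor expansion along the row of $v$ in $\lambda I-A(G_{u,w})$; evaluating at $\lambda=\rho$ and determining the sign of $\phi(G_{u,w},\rho)$ via Cauchy interlacing on the vertex-deleted subgraphs translates the internal-path dichotomy into the sign of a polynomial difference, with the exceptional graphs $C_n$ and $T_n$ being exactly those where this sign vanishes.
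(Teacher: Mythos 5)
First, note that the paper does not prove this lemma at all: it is quoted verbatim from Hoffman and Smith \cite{HS} (and is later used as a black box to derive Corollary \ref{hyperper} via Lemma \ref{eqradius}), so there is no in-paper argument to compare yours against. Judged on its own, your proposal correctly identifies the classical strategy (test vectors plus a Perron--Frobenius comparison in the spirit of Corollary \ref{rho-main}), but the steps that carry the actual mathematical content are either missing or do not work as stated. For the first half, your characterization of edges avoiding internal paths (``pendant or adjacent to a pendant tree'') is inaccurate, and the perturbation on $\{u,v,w\}$ is never specified; the clean route here is simply that when $uw$ lies on no internal path and $G\ne C_n$, the edge lies on a pendant path (or $G$ is itself a path), so $G_{u,w}$ properly contains a copy of $G$ as a connected supergraph and strict monotonicity of $\rho$ under proper subgraphs finishes it --- no perturbation needed.

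The second half is where the real gap lies. Your folding $\tilde y_u=y_u+\alpha y_v$, $\tilde y_w=y_w+\beta y_v$ with constants $\alpha,\beta$ cannot in general produce $A(G)\tilde y\gneq\rho'\tilde y$: the required inequalities at $u$ and $w$ read $y_w+(\beta-1)y_v\ge\rho'\alpha y_v$ and $y_u+(\alpha-1)y_v\ge\rho'\beta y_v$, and already at $\alpha=\beta=0$ these demand $\min(y_u,y_w)\ge y_v$, which the single relation $\rho'y_v=y_u+y_w$ does not guarantee. The Hoffman--Smith proof genuinely needs the global behaviour of the Perron vector along the \emph{entire} internal path: one writes $y_i=A\sigma^i+B\sigma^{-i}$ with $\sigma+\sigma^{-1}=\rho'$, and must prove $\rho'>2$ (equivalently $\sigma>1$) and that \emph{both} coefficients $A,B$ are positive, which is exactly where the degree-$\ge 3$ endpoints and the exclusion of $T_n$ enter (the Smith graphs $C_n$ and $T_n=\widetilde D_{n-1}$ being the only graphs where an internal-path subdivision preserves $\rho=2$). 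None of this is established in your sketch; ``one can tune $\alpha,\beta$'' is precisely the assertion to be proved. Your fallback identity for $\phi(G_{u,w},\lambda)$ is also incomplete: expanding along the row of $v$ produces additional terms $-2\sum_Z\phi(G_{u,w}-Z)$ over cycles $Z$ through $v$, which vanish only when $uw$ is a bridge, and the subsequent sign analysis is again only named, not performed. To use this lemma as the paper does, citing \cite{HS} is the appropriate move; a self-contained proof would have to supply the internal-path Perron-vector analysis in full.
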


With respect to the signless Laplacian matrix of a graph, Cvetkovi\'c and Sim\'c \cite{cve}, and Feng, Li and Zhang \cite{FLZ} obtained the following similar result.

\begin{lemma}{\em \cite{cve,FLZ}}\label{interpathQ}
Let $G$ be a simple connected graph of order $n$.
If $uw$ is an edge of $G$ not on any internal path, and $G \ne C_n$, then $\rho(Q(G_{u,w})>\rho(Q(G))$.
If $uw$ is an edge of $G$  on an internal path,  then $\rho(Q(G_{u,w})<\rho(Q(G))$.
\end{lemma}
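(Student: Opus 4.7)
The plan is to prove this signless-Laplacian analogue of the Hoffman--Smith lemma by a Perron-vector comparison argument, mirroring the adjacency case but using the symmetric nonnegative matrix $Q(G) = D(G) + A(G)$. Since $G$ is connected, $Q(G)$ is nonnegative and irreducible, so Perron--Frobenius yields a positive eigenvector $x > 0$ associated with $\rho := \rho(Q(G))$, unique up to scaling.

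The first step is the setup. Let $v$ be the new vertex inserted when the edge $uw$ is subdivided, and extend $x$ to a test vector $y$ on $V(G_{u,w})$ by $y_i = x_i$ for $i \in V(G)$ and $y_v = t$ with $t > 0$ a free parameter. Using $Q(G)x = \rho x$ together with the local change of degrees and adjacencies at $u, w, v$, a direct calculation shows that $(Q(G_{u,w})y - \rho y)_i = 0$ for every $i \in V(G) \setminus \{u,w\}$, while the three remaining residuals are
\[
r_u = t - x_w, \qquad r_w = t - x_u, \qquad r_v = (2-\rho)\,t + (x_u + x_w).
\]

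The second step applies the Collatz--Wielandt principle for nonnegative irreducible matrices (the matrix analogue of Corollary \ref{rho-main}): if $y > 0$ and $Q(G_{u,w})y - \rho y$ is coordinatewise strictly positive, then $\rho(Q(G_{u,w})) > \rho$, and a strict reverse inequality implies $\rho(Q(G_{u,w})) < \rho$. The problem thus reduces to choosing $t > 0$ so that $r_u, r_w, r_v$ share the desired common sign. In the non-internal case one needs $t$ in the interval $(\max(x_u, x_w),\,(x_u+x_w)/(\rho-2))$, whose nonemptiness is equivalent to $(\rho-2)\max(x_u,x_w) < x_u + x_w$. In the internal-path case one needs $t$ in the reverse interval $((x_u+x_w)/(\rho-2),\,\min(x_u,x_w))$, existing iff $(\rho-2)\min(x_u,x_w) > x_u + x_w$. (The degenerate regime $\rho \le 2$ occurs only for $G = P_2$ and can be handled directly.)

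The main obstacle is verifying these inequalities between Perron-eigenvector components and $\rho$ from the structural hypotheses; the second case is the delicate one, and a naive constant $t$ may not suffice, so the test vector must be refined or the comparison rephrased as a Rayleigh-quotient argument using the Perron eigenvector of $Q(G_{u,w})$ restricted to $V(G)$. The structural input enters precisely at this point: an internal-path edge forces $G$ to contain $K_{1,3}$ and hence $\rho(Q(G)) \ge 4$, while propagating the scalar Perron equation along the chain of degree-$2$ interior vertices forces tight comparability between $x_u, x_w$ and the intermediate values, yielding the required domination. In the non-internal case, with $G \ne C_n$, the edge $uw$ is incident to a pendant or near-pendant configuration, producing the opposite eigenvector comparison. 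Once these structural estimates are in hand, the perturbation skeleton above concludes both inequalities; the technical core is entirely in this Perron-eigenvector analysis, not in the perturbation algebra.
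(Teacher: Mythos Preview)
The paper does not prove this lemma at all: it is quoted verbatim from \cite{cve,FLZ} and used as a black box, so there is no ``paper's own proof'' to compare against. Your proposal must therefore be judged on its own merits.

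The perturbation skeleton you set up is correct: the three residuals $r_u=t-x_w$, $r_w=t-x_u$, $r_v=(2-\rho)t+x_u+x_w$ are right, and the Collatz--Wielandt step is the appropriate tool. The gap is exactly where you say it is, and it is a real gap, not a formality. In the internal-path case your constant-$t$ scheme requires $(\rho-2)\min(x_u,x_w)>x_u+x_w$, and this can fail outright. Take $G$ to be the ``double star'' with centres $u$ (degree $3$) and $w$ (degree $m+1$, $m$ large), joined by the edge $uw$; both centres have degree $\ge 3$, so $uw$ is itself an internal path of length one. A short computation with the Perron system gives $x_w/x_u=[\rho^2-4\rho+1]/(\rho-1)\sim\rho$ as $m\to\infty$, while $\rho\sim m+2$. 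Hence $(\rho-2)x_u\sim\rho\,x_u$ but $x_u+x_w\sim\rho\,x_u$ as well, and in fact $(\rho-3)x_u<x_w$ for all large $m$, so no admissible $t$ exists. The analogous inequality $(\rho-3)\max(x_u,x_w)<\min(x_u,x_w)$ in the non-internal case is equally unproved and equally non-automatic.

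You yourself flag that ``a naive constant $t$ may not suffice'' and that the test vector ``must be refined or the comparison rephrased,'' but you then stop. That refinement \emph{is} the proof: the published arguments in \cite{cve,FLZ} (and the original Hoffman--Smith proof for $A$) do not use a single scalar $t$ but either modify the Perron vector along the entire internal path, or work with the Perron vector of $G_{u,w}$ restricted back to $G$ and compare Rayleigh quotients, or compute with characteristic polynomials. Until you actually carry out one of these, what you have is a correct identification of the difficulty, not a proof.
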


By Lemma \ref{eqradius}, combining with Lemmas \ref{interpathA} and \ref{interpathQ} we will have a parallel result for the graphs $G^{k,{k \over 2}}$.
We will call $P^{k,{k \over 2}}$ an {\it internal path of the hypergraph} $G^{k,{k \over 2}}$, where $P$ is an internal path of $G$.
We use $G^{k,{k \over 2}}_{\mathbf{u},\mathbf{w}}$ to denote the hypergraph $(G_{u,w})^{k,{k \over 2}}$.
Equivalently, $G^{k,{k \over 2}}_{\mathbf{u},\mathbf{w}}$ is obtained from $G^{k,{k \over 2}}$ by {\it subdividing the edge} $\mathbf{u}\mathbf{w}$, that is, by inserting a
  half edge say $\mathbf{v}$ and forming two new edges $\mathbf{u}\mathbf{v}$ and $\mathbf{v}\mathbf{w}$
  instead of the original edge $\mathbf{u}\mathbf{w}$.
%

\begin{corollary} \label{hyperper}
Let $G$ be a connected simple graph of order $n$.
If $uw$ is an edge of $G$ not on any internal path and $G \ne C_n$, then $\rho(\A(G^{k,{k \over 2}}))<\rho(\A(G^{k,{k \over 2}}_{\mathbf{u},\mathbf{w}}))$ and
$\rho(\Q(G^{k,{k \over 2}}))<\rho(\Q(G^{k,{k \over 2}}_{\mathbf{u},\mathbf{w}}))$.

If $uw$ is an edge of $G$ on an internal path, then $\rho(\Q(G^{k,{k \over 2}}))<\rho(\Q(G^{k,{k \over 2}}_{\mathbf{u},\mathbf{w}}))$.
If, in addition, $G \ne T_n$, then $\rho(\A(G^{k,{k \over 2}}))>\rho(\A(G^{k,{k \over 2}}_{\mathbf{u},\mathbf{w}}))$.

\end{corollary}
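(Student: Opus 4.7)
\noindent\textbf{Proof proposal for Corollary \ref{hyperper}.} The plan is to reduce the corollary to its simple-graph analogues (Lemmas \ref{interpathA} and \ref{interpathQ}) by means of the spectral-radius identity for generalized powers established in Lemma \ref{eqradius}. Since all four inequalities concern $\rho(\A)$ and $\rho(\Q)$ of the two hypergraphs $G^{k,\frac{k}{2}}$ and $G^{k,\frac{k}{2}}_{\mathbf{u},\mathbf{w}}$, the strategy is simply to translate each of these four quantities into the spectral radius of the adjacency or signless Laplacian matrix of the corresponding simple graph.

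First I would observe that the hypergraph $G^{k,\frac{k}{2}}_{\mathbf{u},\mathbf{w}}$ is defined in the paragraph preceding the corollary to be $(G_{u,w})^{k,\frac{k}{2}}$, i.e. the generalized power of the subdivided graph. Since $G$ is assumed connected and subdividing an edge preserves connectedness, both $G$ and $G_{u,w}$ are connected simple graphs, so Lemma \ref{eqradius} applies to each. Invoking it twice gives
$$\rho(\A(G^{k,\frac{k}{2}}))=\rho(A(G)),\quad \rho(\Q(G^{k,\frac{k}{2}}))=\rho(Q(G)),$$
$$\rho(\A(G^{k,\frac{k}{2}}_{\mathbf{u},\mathbf{w}}))=\rho(A(G_{u,w})),\quad \rho(\Q(G^{k,\frac{k}{2}}_{\mathbf{u},\mathbf{w}}))=\rho(Q(G_{u,w})).$$

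With these identities in hand, the three assertions of the corollary become direct consequences of the two subdivision lemmas. If $uw$ is not on any internal path and $G\ne C_n$, Lemma \ref{interpathA} gives $\rho(A(G))<\rho(A(G_{u,w}))$ and Lemma \ref{interpathQ} gives $\rho(Q(G))<\rho(Q(G_{u,w}))$, which translate immediately to the claimed strict inequalities between the hypergraph spectral radii. If $uw$ lies on an internal path, Lemma \ref{interpathQ} supplies the signless Laplacian inequality (note that $G_{u,w}$ still has the same underlying path structure), and if additionally $G\ne T_n$, Lemma \ref{interpathA} supplies the adjacency inequality $\rho(A(G))>\rho(A(G_{u,w}))$, again transferred verbatim to the hypergraph setting.

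I do not anticipate a serious obstacle: the proof is essentially a transcription via Lemma \ref{eqradius}. The only point that warrants a sentence of care is verifying that the hypotheses of Lemma \ref{eqradius} (connected simple graph) hold for both $G$ and $G_{u,w}$, and that the exceptional graphs $C_n$ and $T_n$ singled out in the hypotheses of Lemmas \ref{interpathA}, \ref{interpathQ} remain the natural exceptions for the hypergraph versions (since the hypothesis is phrased in terms of the underlying simple graph $G$ itself, this is automatic).
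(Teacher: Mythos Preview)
Your proposal is correct and follows exactly the approach the paper intends: the paper states just before the corollary that it follows ``By Lemma \ref{eqradius}, combining with Lemmas \ref{interpathA} and \ref{interpathQ},'' and your argument spells this out, including the observation that $G^{k,\frac{k}{2}}_{\mathbf{u},\mathbf{w}}=(G_{u,w})^{k,\frac{k}{2}}$ with $G_{u,w}$ connected so that Lemma \ref{eqradius} applies to both graphs. (The paper also remarks that one could give a direct tensor proof via Corollary \ref{rho-main}, but does not carry it out; your reduction is the primary route.)
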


We can also prove Corollary \ref{hyperper} by a direct discussion following the approaches of Hoffman and Smith \cite{HS}, Cvetkovi\'c and Sim\'c \cite{cve}, and Feng, Li and Zhang \cite{FLZ}, together with the using of Corollary \ref{rho-main}.

It is known that the path $P_n$ is the unique graph with the minimum adjacency or signless Laplacian spectral radius among all graphs in $\Gn$.
So, by Lemma  \ref{eqradius}  $P_n^{k,{k \over 2}}$ is the unique one with the minimum adjacency or signless Laplacian spectral radius among all hypergraphs in $\Gnk$.
Applying Lemma  \ref{eqradius} and the result of \cite[Theorem 3.5]{fenglz} on the adjacency spectral radii of simple graphs, or using Corollaries \ref{ineq3} and \ref{hyperper}, we get the following result on the minimizing hypergraphs in $\Gnobk$.

\begin{theorem}\label{a-nob}
Among all hypergraphs in $\Gnobk$,
the minimum spectral radius of the adjacency tensor (respectively, the signless Laplacian tensor) is achieved uniquely by $C_n^{k,{k \over 2}}$ for odd $n$,
and achieved uniquely by $(C_{n-1}+e)^{k,{k \over 2}}$ for even $n$.
\end{theorem}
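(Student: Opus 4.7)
The plan is to transfer the problem from hypergraphs in $\Gnobk$ to simple non-bipartite graphs in $\Gnb$ and then invoke the known extremal result for simple graphs. First I would observe that Theorem \ref{NOB} gives a bijection between $\Gnobk$ and $\Gnb$: a hypergraph $H \in \Gnk$ lies in $\Gnobk$ precisely when $H = G^{k,k/2}$ for some $G \in \Gnb$. Next, Lemma \ref{eqradius} tells us that $\rho(\mathcal{A}(G^{k,k/2})) = \rho(A(G))$ and $\rho(\mathcal{Q}(G^{k,k/2})) = \rho(Q(G))$, so minimizing the adjacency (respectively, signless Laplacian) spectral radius over $\Gnobk$ is equivalent to minimizing $\rho(A(G))$ (respectively, $\rho(Q(G))$) over $G \in \Gnb$.

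Now I would cite \cite[Theorem 3.5]{fenglz} (and its signless Laplacian analogue, which follows from Lemma \ref{interpathQ}) to conclude that for simple non-bipartite connected graphs of order $n$, the minimum adjacency or signless Laplacian spectral radius is attained uniquely by $C_n$ when $n$ is odd, and uniquely by $C_{n-1}+e$ when $n$ is even. Applying the bijection and the equality of spectral radii from Lemma \ref{eqradius} immediately gives that the extremal hypergraphs in $\Gnobk$ are $C_n^{k,k/2}$ and $(C_{n-1}+e)^{k,k/2}$, in the respective parities of $n$. Uniqueness transfers as well because the correspondence $G \mapsto G^{k,k/2}$ is injective (distinct simple graphs yield non-isomorphic generalized power hypergraphs, as one may read off the block structure of the half edges).

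As an alternative, self-contained route, I would argue directly using Corollaries \ref{ineq3} and \ref{hyperper}. The strategy mirrors the simple-graph proof: any non-bipartite graph $G$ contains an odd cycle, say $C_{2\ell+1}$, whose hypergraph power is a proper sub-hypergraph of $G^{k,k/2}$ unless $G$ is itself an odd cycle; one then uses Corollary \ref{ineq3} to strictly decrease the spectral radius by passing to suitable sub-hypergraphs, and Corollary \ref{hyperper} to subdivide edges of $G$ that do not lie on internal paths in order to reach either $C_n$ (for odd $n$) or $C_{n-1}+e$ (for even $n$). Handling the signless Laplacian case is slightly easier because the subdivision inequality in Lemma \ref{interpathQ} requires no exceptional graph $T_n$.

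The main obstacle will be the even $n$ case, where the minimizer $C_{n-1}+e$ is not a cycle and internal-path subdivision must be combined with deletion and structural reduction in a way that preserves non-bipartiteness. Specifically, after reducing a general non-bipartite $G$ to a graph whose unique odd cycle has odd length and whose remaining edges form trees attached to it, one must show that consolidating those trees into a single pendant edge at an optimal vertex of $C_{n-1}$ strictly decreases the adjacency and signless Laplacian spectral radii; both operations are instances of subdividing an edge not on an internal path (together with Corollary \ref{ineq3} where needed), but one must check carefully that the resulting graph remains non-bipartite at each step. This is precisely what is handled in \cite{fenglz} for the simple-graph case, so invoking that paper and transferring via Lemma \ref{eqradius} yields the quickest proof.
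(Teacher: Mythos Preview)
Your proposal is correct and follows exactly the route the paper takes: reduce to simple graphs via Lemma \ref{eqradius} (together with the definition of $\Gnobk$ and Theorem \ref{NOB}), invoke \cite[Theorem 3.5]{fenglz} for the adjacency case and its signless Laplacian analogue, and note that one can alternatively argue directly using Corollaries \ref{ineq3} and \ref{hyperper}. The paper records this in a single sentence preceding the theorem; you have simply spelled out the details, so there is nothing to add.
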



Hoffman \cite{Hof} observed if a simple graph $G$ properly contains a cycle, then $\rho(A(G)) > \tau^{1/2}+\tau^{-1/2}=\tau^{3/2}=\sqrt{2+\sqrt{5}}$, where $\tau=(\sqrt{5}+1)/2$ is the golden mean.
He proved that $\tau^{3/2}$ is a limit point, and found all limit points of the adjacency spectral radii less than $\tau^{3/2}$.
The work of Hoffman was extended by Shearer \cite{S} to show that every real number $r \ge \tau^{3/2}$ is the limit point of the adjacency spectral radii of simple graphs.
Furthermore, Doob \cite{Doob} proved that for each $r \ge \tau^{3/2}$ (respectively, $r \le -\tau^{3/2}$) and for any $k$, there exists a sequences of graphs whose $k$th largest eigenvalue (respectively, $k$th smallest eigenvalues) converge to $r$.
By Lemma \ref{eqradius}, we get the following result on the hypergraphs $G_n^{k,{k \over 2}}$, which correspond to the results of Hoffman \cite{Hof} and Shearer \cite{S} respectively.
Denote $ \Gk=\cup_{n \in \mathbb{N}}\Gnk$ and $\Gobk=\cup_{n \in \mathbb{N}} {^{\rm nob}}\!\mathcal{G}_n^{k,\frac{k}{2}}$.

\begin{theorem}
For $n=1,2,\ldots$, let $\beta_n$ be the positive root of $P_n(x)=x^{n+1}-(1+x+x^2+\cdots+x^{n-1})$.
Let $\alpha_n=\beta_n^{1/2}+\beta_n^{-1/2}$. Then
$2=\alpha_1 < \alpha_2 < \cdots $ are all limit points of the hypergraphs in $\Gk$ smaller than $\tau^{1/2}+\tau^{-1/2}=\lim_n \alpha_n$.
\end{theorem}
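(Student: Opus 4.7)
The plan is to transfer Hoffman's classical theorem on limit points of adjacency spectral radii of simple graphs \cite{Hof} to the hypergraph setting using Lemma \ref{eqradius}.

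First I would observe that the assignment $G \mapsto G^{k,k/2}$ is a bijection from $\cup_{n \in \mathbb{N}} \Gn$ onto $\Gk$: two non-isomorphic simple graphs give rise to two non-isomorphic hypergraphs, since one recovers $G$ from $G^{k,k/2}$ by contracting every half edge to a single vertex. Moreover, by Lemma \ref{eqradius} this bijection preserves the adjacency spectral radius, i.e.\ $\rho(A(G)) = \rho(\mathcal{A}(G^{k,k/2}))$ for every connected simple graph $G$.

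Second I would translate ``limit point'' through this bijection. A real number $r$ is a limit point of the adjacency spectral radii of $\Gk$ if and only if there exists a sequence of pairwise non-isomorphic $H_m \in \Gk$ with $\rho(\mathcal{A}(H_m)) \to r$. Writing $H_m = G_m^{k,k/2}$ with $G_m$ pairwise non-isomorphic simple connected graphs, the spectral radius identity above shows this is equivalent to $\rho(A(G_m)) \to r$. Hence the limit points of $\{\rho(\mathcal{A}(H)) : H \in \Gk\}$ coincide exactly with the limit points of $\{\rho(A(G)) : G \in \cup_n \Gn\}$.

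Third I would invoke Hoffman's theorem, which asserts that the limit points strictly below $\tau^{1/2}+\tau^{-1/2}=\tau^{3/2}$ of the adjacency spectral radii of simple connected graphs are precisely the numbers $\alpha_n = \beta_n^{1/2}+\beta_n^{-1/2}$ defined via the roots of $P_n(x)=x^{n+1}-(1+x+\cdots+x^{n-1})$, and that they form a strictly increasing sequence $2=\alpha_1 < \alpha_2 < \cdots$ with $\lim_n \alpha_n = \tau^{3/2}$. Combining this with the previous paragraph yields the theorem. There is essentially no obstacle here beyond bookkeeping; the content of the statement lies in Lemma \ref{eqradius} (already established) and in Hoffman's theorem (cited).
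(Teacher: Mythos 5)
Your proposal is correct and follows exactly the route the paper intends: the paper gives no separate proof of this theorem, stating only that it follows from Lemma \ref{eqradius} together with Hoffman's classification of limit points below $\tau^{3/2}$, which is precisely the transfer argument you spell out. The extra detail you supply (the spectral-radius-preserving correspondence $G \mapsto G^{k,k/2}$ and the translation of limit points through it) is just the bookkeeping the paper leaves implicit.
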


\begin{theorem}
For any $r \ge \tau^{3/2}$, there exists a sequences of hypergraphs $G_{n_t}^{k,{k \over 2}}$ whose spectral radii converge to $r$.
\end{theorem}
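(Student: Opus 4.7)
The plan is to leverage Shearer's theorem, stated in the paragraph immediately preceding the theorem, together with the spectral identity from Lemma \ref{eqradius}, since these two ingredients together do essentially all the work.

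First, I would invoke Shearer's result: for any $r \ge \tau^{3/2}$, there exists a sequence of connected simple graphs $\{G_{n_t}\}_{t \ge 1}$ (with $|V(G_{n_t})| = n_t$) such that $\rho(A(G_{n_t})) \to r$ as $t \to \infty$. (One should remark that Shearer's graphs may be taken connected: if they are not, then for each $t$ the spectral radius of $G_{n_t}$ equals the spectral radius of a connected component, which can be substituted in place of $G_{n_t}$ without changing $\rho$.)

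Next, I would apply Lemma \ref{eqradius} term by term. For each $t$, since $G_{n_t}$ is a connected simple graph, Lemma \ref{eqradius} yields
\[ \rho(\mathcal{A}(G_{n_t}^{k,\frac{k}{2}})) = \rho(A(G_{n_t})). \]
Therefore
\[ \lim_{t \to \infty} \rho(\mathcal{A}(G_{n_t}^{k,\frac{k}{2}})) = \lim_{t \to \infty} \rho(A(G_{n_t})) = r, \]
which exhibits $r$ as a limit point of the spectral radii of the hypergraphs in $\mathcal{G}^{k,\frac{k}{2}}$.

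There is essentially no technical obstacle here; the whole content of the theorem is the reduction from the hypergraph spectral problem to the classical matrix spectral problem via the identity in Lemma \ref{eqradius}. The only minor subtlety worth mentioning is the connectedness caveat above, which is why I would explicitly state that Shearer's sequence may be taken to consist of connected simple graphs before applying the lemma. If one wanted a more self-contained exposition, one could sketch Shearer's construction (attaching long paths and caterpillar-like trees to suitable base graphs and analyzing characteristic polynomials), but for the present paper a citation suffices.
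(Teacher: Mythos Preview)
Your proposal is correct and matches the paper's approach exactly: the paper does not give a separate proof of this theorem but simply states it as an immediate consequence of Shearer's result combined with Lemma \ref{eqradius}. Your write-up just makes explicit the two-step argument (Shearer for simple graphs, then the spectral identity $\rho(A(G))=\rho(\mathcal{A}(G^{k,k/2}))$) that the paper leaves implicit, and your remark on connectedness is a sensible clarification.
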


The smallest limit point of the adjacency spectral radii of simple graphs is $2$, which is realized by a sequence of path.
If $r < \tau^{3/2}$ is a limit point, it suffices to consider the trees by Hoffman's observation.
The construction of graphs whose adjacency spectral radii converge to $r \ge \tau^{3/2}$ in \cite{Doob,S} are trees $T(n_1,n_2,\ldots,n_k)$ called {\it caterpillars}, which is obtained from a path on vertices $v_1,v_2,\ldots,v_k$ by attaching $n_j \ge 0$ pendant edges at the vertex $v_j$ for each $j=1,2,\ldots,k$.
We could not find any known sequence of non-bipartite graphs whose adjacency spectral radii converge.
However, motivated by an example in Hoffman's work \cite{Hof}, we get the following result.

\begin{lemma}
$$\lim_{n \to \infty} \rho(A(C_{2n+1}+e))=\tau^{3/2}.$$
\end{lemma}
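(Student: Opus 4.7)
The plan is to set up the Perron eigenvector equations for $C_{2n+1}+e$, collapse them to a one-variable recurrence using the reflection symmetry, derive a single implicit equation for $\rho_n:=\rho(A(C_{2n+1}+e))$, and then pass to the limit.

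Label the cycle vertices $v_1,v_2,\ldots,v_{2n+1}$ and let $p$ be the pendant attached at $v_1$, so $d(v_1)=3$, $d(p)=1$, and all other degrees are $2$. Let $x>0$ be the Perron eigenvector. Since $C_{2n+1}+e$ properly contains the cycle $C_{2n+1}$, Hoffman's observation recalled just before the lemma yields $\rho_n>\tau^{3/2}>2$, and so I may write $\rho_n=\beta_n+\beta_n^{-1}$ with $\beta_n>1$. The reflection $\sigma$ fixing $v_1,p$ and swapping $v_j\leftrightarrow v_{2n+3-j}$ is an automorphism; by Lemma \ref{AL-EV} combined with the uniqueness of the positive eigenvector in Theorem \ref{PF}(2), $x_\sigma=x$. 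Setting $w_i:=x_{v_{i+1}}$ for $0\le i\le n$, the eigenvector equations at the degree-$2$ cycle vertices give the recurrence $\rho w_i=w_{i-1}+w_{i+1}$ for $1\le i\le n-1$, with general solution $w_i=A\beta^i+B\beta^{-i}$.

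Next I would extract the two boundary equations. At the ``middle'' vertex $v_{n+1}$, whose neighbors are $v_n$ and $v_{n+2}$ (and $x_{v_{n+2}}=w_n$ by the symmetry), we get $\rho w_n=w_{n-1}+w_n$; substituting the closed form and using $\rho=\beta+1/\beta$ simplifies to $A=B\beta^{-(2n+1)}$. At the pendant $p$ the equation is $\rho x_p=w_0$, and at $v_1$ it reads $\rho w_0=2w_1+x_p$; eliminating $x_p$ gives $w_1/w_0=(\rho^2-1)/(2\rho)$. Substituting $w_0=A+B$, $w_1=A\beta+B/\beta$, $A=B\beta^{-(2n+1)}$ and clearing denominators (using $(\rho^2-1)/(2\rho)=(\beta^4+\beta^2+1)/[2\beta(\beta^2+1)]$) I arrive at the defining equation
\[
\beta^4-\beta^2-1 \;=\; \beta^{-(2n+1)}\bigl(\beta^4+\beta^2-1\bigr),\qquad(\star)
\]
which $\beta_n$ must satisfy.

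Finally I would pass to the limit in $(\star)$. Since $\rho_n\le\Delta(C_{2n+1}+e)=3$, the sequence $\beta_n$ is bounded above by $(3+\sqrt5)/2$, and since $\rho_n>\tau^{3/2}$ we have $\beta_n>\sqrt{\tau}>1$. Thus $\beta_n^4+\beta_n^2-1$ stays bounded while $\beta_n^{-(2n+1)}\le\tau^{-(2n+1)/2}\to0$, so the right-hand side of $(\star)$ tends to $0$ and $\beta_n^4-\beta_n^2-1\to0$. The map $\beta\mapsto\beta^4-\beta^2-1$ is strictly increasing on $(1,\infty)$ with unique zero $\beta=\sqrt\tau$, so $\beta_n\to\sqrt\tau$ and hence $\rho_n\to\sqrt\tau+1/\sqrt\tau=\sqrt{2+\sqrt5}=\tau^{3/2}$. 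The main technical obstacle is correctly executing the symmetry reduction and the boundary eliminations that produce $(\star)$; once that identity is in hand the asymptotic argument is immediate.
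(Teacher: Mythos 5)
Your proposal is correct, and it takes a genuinely different route from the paper. The paper's proof is variational: it establishes the monotonicity $x_{v_1}>x_{v_2}>\cdots>x_{v_{n+1}}$ of the Perron vector along the cycle, deduces $2x_{v_{n+1}}x_{v_{n+2}}<\tfrac{2}{2n+1}$ from the normalization, and writes the Rayleigh quotient as that of the tree $C_{2n+1}+e-v_{n+1}v_{n+2}$ plus an error term of order $1/n$; it then invokes Hoffman's Proposition 3.6 for the limit of the spectral radii of those trees, together with the monotonicity in $n$ supplied by Lemma \ref{interpathA}, to squeeze the limit. Your proof instead solves the eigenvalue problem essentially exactly: the symmetry reduction (justified by Lemma \ref{AL-EV} and Theorem \ref{PF}(2), exactly as the paper does for half edges in Lemma \ref{AL-PV}), the three-term recurrence with solution $A\beta^i+B\beta^{-i}$, and the two boundary conditions produce the characteristic relation $\beta^4-\beta^2-1=\beta^{-(2n+1)}(\beta^4+\beta^2-1)$, which I have checked; the limit then falls out since the right-hand side vanishes as $n\to\infty$ (using $\beta_n>\sqrt{\tau}$, which comes from Hoffman's observation quoted before the lemma) and $\beta^4-\beta^2-1$ has $\sqrt{\tau}$ as its unique zero on $(1,\infty)$. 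What each approach buys: the paper's argument is shorter on computation and generalizes to situations where an exact transfer-matrix solution is unavailable, but it leans on an external result (Hoffman's Proposition 3.6) for the tree limit; your argument is self-contained apart from the lower bound $\rho_n>\tau^{3/2}$, gives the exact defining equation for $\rho_n$ (hence, in principle, the rate of convergence $\beta_n-\sqrt{\tau}=O(\tau^{-n})$), at the cost of a more delicate algebraic elimination. The only point worth making explicit is the last step: from $f(\beta_n)\to 0$ with $f(\beta)=\beta^4-\beta^2-1$ you conclude $\beta_n\to\sqrt{\tau}$; this needs $\beta_n$ confined to a compact subinterval of $(1,\infty)$ on which $f$ is a homeomorphism onto its image, which your bounds $\sqrt{\tau}<\beta_n\le(3+\sqrt{5})/2$ do provide.
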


{\bf Proof.}
Label the vertices of $C_{2n+1}+e$ as follows:
the pendant vertex is labeled by $v_0$, starting from the vertex of degree $3$ the vertices of the cycle is labeled by $v_1, v_2, \ldots, v_{2n+1}$ clockwise.
Note that now $e=v_0v_1$.
Let $x$ be a unit Perron vector of $A(C_{2n+1}+e)$, and let $\rho:=\rho(A(C_{2n+1}+e))$.
We assert that $x_{v_1} > x_{v_2} > \cdots > x_{v_{n+1}}$.
By symmetry, $x_{v_k}=x_{v_{2n+3-k}}$ for $k=2,3,\ldots,n+1$.
By the eigenvector equation on the vertex $v_{n+1}$, noting that $\rho>\tau^{3/2}$, we get
$$ x_{v_n}=\rho x_{v_{n+1}} -x_{v_{n+2}}=\rho x_{v_{n+1}} -x_{v_{n+1}}=(\rho-1) x_{v_{n+1}}> x_{v_{n+1}}.$$
Assume that $x_{v_{n-k+1}}>x_{v_{n-k+2}}$ for $k \ge 1$.
Then $$x_{v_{n-k}}=\rho x_{v_{n-k+1}}-x_{v_{n-k+2}} > (\rho-1)x_{v_{n-k+1}}>x_{v_{n-k+1}}.$$
So we prove the assertion by induction.
Note that
$$ 1=\sum_{i=0}^{2n+1}x_{v_i}^2 > x_{v_1}^2 +2 (x_{v_2}^2+  \cdots + x_{v_{n+1}}^2) > (2n+1)x_{v_{n+1}}^2.$$
So $2 x_{v_{n+1}}^2 < \frac{2}{2n+1}$.
Noting that $x_{v_{n+1}}=x_{v_{n+2}}$, we have
\begin{align*}
\rho(A(C_{2n+1}+e)) & = \sum_{uv \in E(C_{2n+1}+e)} 2x_ux_v \\
&= x^T A(C_{2n+1}+e-v_{n+1}v_{n+2})x + 2 x_{v_{n+1}}x_{v_{n+2}} \\
& < \rho(A(C_{2n+1}+e-v_{n+1}v_{n+2}))+\frac{2}{2n+1}.
\end{align*}
As $\rho(A(C_{2n+1}+e))$ is decreasing in $n$ by Lemma \ref{interpathA} and $\rho(A(C_{2n+1}+e))> \tau^{3/2}$, the limit $\lim_{n \to \infty} \rho(A(C_{2n+1}+e))$ exists.
By the above inequality, we have
$$ \tau^{3/2} \le \lim_{n \to \infty} \rho(A(C_{2n+1}+e)) \le \lim_{n \to \infty} \rho(A(C_{2n+1}+e-v_{n+1}v_{n+2}))=\tau^{3/2},$$
where the last equality follows from Proposition 3.6 of \cite{Hof}. \hfill $\blacksquare$

By Theorem \ref{NOB} and Lemma \ref{eqradius}, we get the smallest limit point of the adjacency spectral radii of the non-odd-bipartite hypergraphs in $\Gobk$.

\begin{corollary}
The value $\tau^{3/2}$ is the smallest limit point of the spectral radii of the adjacency tensors of the non-odd-bipartite hypergraphs in $\Gobk$.
\end{corollary}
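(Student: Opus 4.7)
The plan is to reduce the question to one about simple graphs and then to combine the lemma just proved with Hoffman's ``gap'' observation already quoted in the section. First, by Theorem \ref{NOB}, $G^{k,k/2}$ is non-odd-bipartite if and only if $G$ is non-bipartite, and by Lemma \ref{eqradius}, $\rho(\mathcal{A}(G^{k,k/2})) = \rho(A(G))$ for every connected simple graph $G$. Consequently, the set
$$R \;=\; \{\rho(\mathcal{A}(H)) : H \text{ is a non-odd-bipartite hypergraph in } \Gobk\}$$
coincides with the set $\{\rho(A(G)) : G \text{ connected non-bipartite simple graph}\}$, and the two sets have exactly the same limit points. So I only need to identify the smallest limit point of the latter.

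Next I would show that $\tau^{3/2}$ is a limit point. Each graph $C_{2n+1}+e$ is connected and non-bipartite (it contains the odd cycle $C_{2n+1}$) and properly contains a cycle, so by Hoffman's observation $\rho(A(C_{2n+1}+e)) > \tau^{3/2}$ strictly. The lemma just proved gives $\rho(A(C_{2n+1}+e)) \to \tau^{3/2}$ as $n \to \infty$. Moreover, in $C_{2n+1}+e$ the vertex $v_1$ is the only vertex of degree $\ge 3$, so the whole cycle forms an internal path through $v_1$; by Lemma \ref{interpathA}, subdividing any cycle edge strictly decreases the adjacency spectral radius, and hence the sequence $\{\rho(A(C_{2n+1}+e))\}_{n\ge 1}$ is strictly decreasing. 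The values are therefore pairwise distinct and approach $\tau^{3/2}$ from above, so $\tau^{3/2}$ is a limit point of $R$.

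To rule out any limit point below $\tau^{3/2}$, I would observe the dichotomy: a connected non-bipartite graph $G$ is either itself an odd cycle, in which case $\rho(A(G)) = 2$, or else properly contains a cycle, in which case Hoffman's observation gives $\rho(A(G)) > \tau^{3/2}$. Thus
$$R \;\subseteq\; \{2\}\;\cup\;(\tau^{3/2},\infty).$$
The entire interval $(2,\tau^{3/2}]$ is disjoint from $R$, so $2$ is an isolated point of $R$ and hence not a limit point, and no number in $[0,\tau^{3/2})$ can be a limit point of $R$ either.

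The proof is essentially bookkeeping rather than a genuine difficulty: the essential analytic input is Hoffman's spectral gap (which both furnishes the lower bound on competing spectral radii and forces $2$ to be isolated), and the approximation at $\tau^{3/2}$ from above is handled by the preceding lemma. The one point where care is required is exactly that $2$ should not be mistaken for a limit point merely because infinitely many odd cycles realize it; the crucial fact is that odd cycles contribute only the single value $2$ to $R$ and the gap $(2,\tau^{3/2}]$ is empty.
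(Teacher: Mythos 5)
Your proposal is correct and follows essentially the same route the paper intends: reduce to simple non-bipartite graphs via Theorem \ref{NOB} and Lemma \ref{eqradius}, obtain $\tau^{3/2}$ as a limit point from the preceding lemma on $C_{2n+1}+e$, and exclude anything smaller via Hoffman's observation together with the dichotomy that a connected non-bipartite graph is either an odd cycle (spectral radius exactly $2$) or properly contains a cycle. Your explicit treatment of why $2$ is not a limit point (odd cycles contribute only the single value $2$, and $(2,\tau^{3/2}]$ is empty) is a worthwhile detail the paper leaves implicit.
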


\small

\end{document}